\documentclass[12pt, reqno]{amsart}
\usepackage{amsmath, amsthm, amscd, amsfonts, amssymb}
\usepackage{bm}
\usepackage[all,cmtip]{xy}
\usepackage{xcolor}
\newtheorem{theorem}{Theorem}[section]
\newtheorem{lemma}[theorem]{Lemma}
\newtheorem{proposition}[theorem]{Proposition}

\theoremstyle{definition}
\newtheorem{definition}[theorem]{Definition}

\newtheorem{remark}[theorem]{Remark}
\numberwithin{equation}{section}

\newcommand{\vp}{\varphi}

\newcommand{\clb}{\mathcal{B}}

\newcommand{\cle}{\mathcal{E}}

\newcommand{\clh}{\mathcal{H}}

\newcommand{\cll}{\mathcal{L}}
\newcommand{\clm}{\mathcal{M}}

\newcommand{\clq}{\mathcal{Q}}

\newcommand{\cls}{\mathcal{S}}

\newcommand{\clw}{\mathcal{W}}

\newcommand{\D}{\mathbb{D}}

\newcommand{\T}{\mathbb{T}}
\newcommand{\Z}{\mathbb{Z}}
\newcommand{\raro}{\rightarrow}

\begin{document}

\title[Invariant subspaces of Jordan blocks of $H^2(\mathbb{D}^n)$]{Invariant subspaces of Jordan blocks of $H^2(\mathbb{D}^n)$}


\author[Sneha B]{Sneha B}
\address{Statistics and Mathematics Unit, Indian Statistical Institute, 8th Mile, Mysore Road, Bangalore, 560059, India}
\email{sneharbkrishnan@gmail.com, rs\textunderscore math2105@isibang.ac.in}

\author[Sarkar]{Jaydeb Sarkar}
\address{Statistics and Mathematics Unit, Indian Statistical Institute, 8th Mile, Mysore Road, Bangalore, 560059, India}
\email{jay@isibang.ac.in, jaydeb@gmail.com}
		
\author[Seto]{Michio Seto}
\address{National Defense Academy of Japan, Yokosuka 239-8686, Japan}
\email{mseto@nda.ac.jp}
		
\subjclass[2020]{47A15, 30H10, 42B30, 32A35}
\keywords{Model spaces, polydisc, Hardy space, invariant subspaces, reducing subspaces, inner functions, unitary equivalence}

\begin{abstract}
It is known that invariant subspaces of classical Jordan blocks of the Hardy space over the open unit disc are described by factorizations of inner functions. In the polydisc setting, Jordan blocks are tensor products of one-variable Jordan blocks. We provide representations of their doubly commuting invariant subspaces.
\end{abstract}
	
\maketitle
	
\tableofcontents

\section{Introduction}

Product domains, such as the open unit polydisc $\D^n$ in $\mathbb{C}^n$, and tensor products of Hilbert spaces, such as the Hardy space over $\D^n$ denoted by $H^2(\D^n)$, are closely related. For instance, one can identify $H^2(\D^n)$ with the $n$-fold tensor products
\[
H^2(\D^n) \cong \underbrace{H^2(\D) \otimes \cdots \otimes H^2(\D)}_{n \text{ times}},
\]
via the unitary operator $U$ that maps $z^k := z_1^{k_1} \cdots z_n^{k_n}$ to $z^{k_1} \otimes \cdots \otimes z^{k_n}$ for all $k=(k_1, \ldots, k_n) \in \Z_+^n$. Recall that $H^2(\D^n)$ consists of holomorphic functions $f$ on $\D^n$ with
\[
\|f\|_{H^2(\D^n)}: = \bigg(\sup\limits_{0\le r<1}\int_{\mathbb{T}^n}|f(rz_1,\ldots, rz_n)|^2 d \mu \bigg)^{\frac{1}{2}}<\infty,
\]
where $d\mu$ is the normalized Lebesgue measure on $\mathbb{T}^n$ \cite{MR255841}. For each $i \in I_n$ (where $I_n = \{1, \ldots, n\}$), the multiplication operator $T_{z_i}$ on $H^2(\D^n)$, defined by
\[
T_{z_i} f = z_if,
\]
for $f \in H^2(\D^n)$, is an isometry. In view of the unitary $U$, we have the identification:
\[
T_{z_i} \cong I_{H^2(\D)} \otimes \cdots I_{H^2(\D)} \otimes \underbrace{T_z}_{i \text{ th}} \otimes I_{H^2(\D)} \cdots \otimes I_{H^2(\D)},
\]
for all $i \in I_n$. This shows that $(T_{z_1}, \ldots, T_{z_n})$ is an $n$-tuple of doubly commuting isometries. Recall that for $\{T_i\}_{i=1}^n \subseteq \clb(\clh)$ (where $\clb(\clh)$ denotes the space of bounded linear operators on $\clh$), the $n$-tuple $(T_1, \ldots, T_n)$ is said to be \textit{doubly commuting} if for all $i < j$,
\[
T_i T_j = T_j T_i \text{ and } T^*_i T_j = T_j T_i^*.
\]
In the context of $n$-tuples of doubly commuting operators, we always assume that $n>1$.

Now we turn to quotient modules of $H^2(\D^n)$. A closed subspace $\clq$ of $H^2(\D^n)$ is said to be a \textit{quotient module} or \textit{model space} if
\[
T_{z_i}^* \clq \subseteq \clq,
\]
for all $i \in I_n$. The structure of quotient modules is quite intricate, and these complications are known to be connected with various problems in the theory of Hilbert function spaces \cite{CG, DY}. In the one-variable case, however, quotient modules are concrete \cite{HB}: A nontrivial proper closed subspace $\clq$ of $H^2(\D)$ is a quotient module if and only if there exists a nonconstant inner function $\theta \in H^\infty(\D)$ such that $\clq = \clq_\theta$, where
\[
\clq_{\theta}:=H^2(\D) \ominus \theta H^2(\D).
\]
In this case, define the \textit{model operator} or \textit{Jordan block} $S_\theta \in \clb(\clq_\theta)$ \cite[page 36]{HB} by
\[
S_\theta = P_{\clq_{\theta}}T_z|_{\clq_{\theta}},
\]
where $P_{\clq_{\theta}}$ denotes the orthogonal projection of $H^2(\D^n)$ onto $\clq_\theta$. Occasionally, we also refer $\clq_\theta$ itself as a Jordan block. Recall that $H^\infty(\D^n)$ is the space of all bounded analytic functions on $\D^n$, and a function $\vp \in H^\infty(\D^n)$ is inner if $|\vp(z)| = 1$ a.e. on $\T^n$.

In the case $n>1$, the simplest examples of quotient modules are again given by $n$-fold tensor products of Jordan blocks, or more generally, quotient modules of $H^2(\D)$:
\[
\clq=\clq_1\otimes\cdots\otimes\clq_n,
\]
where $\{\clq_i\}_{i=1}^n$ are quotient modules of $H^2(\D)$. In fact, these are precisely the doubly commuting quotient modules of $H^2(\D^n)$; this was proved for $n=2$ in \cite{INS} and for general $n>1$ in \cite{MR3272037} (also see \cite{DanGuo}). Recall that a quotient module $\clq \subseteq H^2(\D^n)$ is said to be \textit{doubly commuting} if $(P_\clq T_{z_1}|_\clq, \ldots, P_\clq T_{z_n}|_\clq)$ on $\clq$ is doubly commuting.

A doubly commuting quotient module $\clq$ as above is called a Jordan block of $H^2(\D^n)$ if $\clq_i$ is a Jordan block of $H^2(\D)$ for all $i \in I_n$. Equivalently:

\begin{definition}
Jordan blocks of $H^2(\D^n)$ are quotient modules of the form
\[
\clq_\Theta = \clq_{\theta_1} \otimes \cdots \otimes \clq_{\theta_n},
\]
where $\{\theta_i\}_{i=1}^n$ are nonconstant inner functions in $H^\infty(\D)$.
\end{definition}

Now we turn to invariant subspaces (or submodules) of Jordan blocks of $H^2(\D^n)$. We begin with the classical case $n=1$ case: A closed subspace $\clw \subseteq \clq_\theta$ is called a \textit{submodule} (or, \textit{invariant subspace of $S_\theta$}) if $S_\theta \clw \subseteq \clw$. Every submodule $\clw$ of $\clq_\theta$ admits the representation
\[
\clw = \vp \clq_\psi,
\]
for some inner functions $\vp$ and $\psi$ such that $\theta = \vp \psi$ (see \cite[page 38]{HB} and also Section \ref{sec: subm clas JB}). Now, assume that $n>1$. Fix a Jordan block $\clq_\Theta$ of $H^2(\D^n)$. For $j \in I_n$, set
\[
S_{\Theta_j} = I_{\clq_{\theta_1}} \otimes \cdots I_{\clq_{\theta_{j-1}}} \otimes \underbrace{S_{\theta_j}}_{j-\text{th}} \otimes I_{\clq_{\theta_{j+1}}} \cdots \otimes I_{\clq_{\theta_n}}.
\]
Then
\[
S_\Theta:=(S_{\Theta_1}, \ldots, S_{\Theta_n}),
\]
defines an $n$-tuple of doubly commuting contractions on $\clq_\Theta$. A \textit{submodule of the Jordan block $\clq_\Theta$} is a closed subspace $\clm \subseteq \clq_\Theta$ with
\[
S_{\Theta_i} \clm \subseteq \clm,
\]
for all $i \in I_n$. Such a submodule $\clm$ induces a commuting $n$-tuple $R_{\Theta, \clm} = (R_1, \ldots, R_n)$ on $\clm$, where
\[
R_i = S_{\Theta_i}|_\clm \in \clb(\clm),
\]
for all $i \in I_n$. We call $\clm$ \textit{doubly commuting} if $R_{\Theta, \clm}$ on $\clm$ is doubly commuting. The following is one of the main results of this paper:

\begin{theorem}
A submodule $\clm$ of $\clq_\Theta$ is doubly commuting if and only if
\[
\clm = \eta_1 \clq_{\vp_1} \otimes \cdots \otimes \eta_n \clq_{\vp_n},
\]
where $\eta_j, \vp_j \in H^\infty(\D)$ are inner functions satisfying $\eta_j \vp_j = \theta_j$ for all $j \in I_n$.
\end{theorem}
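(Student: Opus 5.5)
The plan is to reduce the statement to a general structure result for invariant subspaces of tensor products of doubly commuting tuples, and then apply the one-variable description $\clw = \vp\clq_\psi$ with $\theta=\vp\psi$ in each coordinate.

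\textbf{Sufficiency.} Suppose $\clm = \eta_1\clq_{\vp_1}\otimes\cdots\otimes\eta_n\clq_{\vp_n}$. By the one-variable result, each $\clw_j:=\eta_j\clq_{\vp_j}$ is an $S_{\theta_j}$-invariant subspace of $\clq_{\theta_j}$. Hence $\clm$ is invariant under every $S_{\Theta_j}$, and
\[
R_j = I_{\clw_1}\otimes\cdots\otimes S_{\theta_j}|_{\clw_j}\otimes\cdots\otimes I_{\clw_n}.
\]
Operators acting on different tensor factors doubly commute, so $R_{\Theta,\clm}$ is doubly commuting.

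\textbf{Necessity.} Assume $\clm$ is a doubly commuting submodule. The strategy is to show that $\clm=\clw_1\otimes\cdots\otimes\clw_n$ with $\clw_j\subseteq\clq_{\theta_j}$ invariant under $S_{\theta_j}$; the one-variable theorem then gives $\clw_j=\eta_j\clq_{\vp_j}$ with $\eta_j\vp_j=\theta_j$. (If some $\clw_j=\{0\}$ then $\clm=\{0\}$, which we either exclude or handle by convention.)

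To get the product structure I would use defect spaces. Let $D_j=I-R_jR_j^*$ on $\clm$. These commute, since $R_j^*$ commutes with $R_i$ for $i\ne j$. In $\clq_{\theta}$ we have $I-S_\theta S_\theta^* = P_{\clq_\theta}(1\otimes 1)P_{\clq_\theta}$ restricted to $\clq_\theta$, which is the rank-one projection onto $k_0^\theta:=P_{\clq_\theta}1 = 1-\overline{\theta(0)}\theta$ (up to normalisation). The first real step is to compute that $R_jR_j^* = P_\clm S_{\Theta_j}P_\clm S_{\Theta_j}^*|_\clm$, and to show that the product of the defects,
\[
D:=\prod_j (I-R_jR_j^*),
\]
is a nonzero finite-rank operator that generates $\clm$. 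I expect this rank to be one. Concretely, I would aim to prove
\[
\clm=\overline{\mathrm{span}}\{R^k x : k\in\Z_+^n,\ x\in \mathrm{ran}\,D\}
\]
by a Wold-type decomposition for doubly commuting contractions. Each $R_j$ is a $C_{\cdot 0}$ contraction, being a compression of a shift, so $R_j^{*m}\to0$ strongly. Hence $\clm=\bigvee_{k} R^k\,\mathrm{ran}\prod_j D_j$, by iterating the one-variable identity $I=\sum_{m<N} R^m D R^{*m}+R^NR^{*N}$ in each variable, which is legitimate because the $D_j$ commute with $R_i$, $R_i^*$ for $i\ne j$.

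\textbf{Main obstacle.} The hardest step is identifying the wandering-type space $\mathrm{ran}\prod_j D_j$ as a product vector $x=x_1\otimes\cdots\otimes x_n$, and showing that the resulting cyclic subspaces factor.

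The alternative route I would try first avoids that computation. Set $\clm_j:=\overline{\mathrm{span}}$ of the $j$-th slices of $\clm$, and prove $\clm=\clm_1\otimes\cdots\otimes\clm_n$ using double commutativity: the projection $P_\clm$ should commute with each $S_{\Theta_j}S_{\Theta_j}^*$ and $S_{\Theta_j}^*S_{\Theta_j}$ and hence lie in the appropriate tensor-product algebra. In the rank-one defect picture this means the following. Each $\mathrm{ran}(I-R_jR_j^*)$ is $\clm\cap(\clw'_j\otimes\cdots)$ for a one-dimensional $\clw'_j\subseteq\clq_{\theta_j}$, namely $\clw_j\ominus S_{\theta_j}\clw_j$. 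Taking intersections then gives $\mathrm{ran}\,D=\mathrm{span}\{x_1\otimes\cdots\otimes x_n\}$. The cyclic subspace generated by a product vector under $S_{\Theta_1},\dots,S_{\Theta_n}$ is the tensor product of the one-variable cyclic subspaces $\clw_j:=\bigvee_m S_{\theta_j}^m x_j$, and this completes necessity.

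The delicate point, which I would check carefully, is that double commutativity of the restrictions $R_j$ (rather than of the $S_{\Theta_j}$ themselves) forces $R_j^*=P_\clm S_{\Theta_j}^*|_\clm$ to act in one coordinate only.
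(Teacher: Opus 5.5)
\textbf{Verdict: genuine gap.} The step that carries the whole theorem, that $\mathrm{ran}\prod_j(I-R_jR_j^*)$ is spanned by a single product vector, is not proved, and the route you offer for it rests on a false claim.

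\textbf{What works.} Your sufficiency direction is fine, and so are the outer steps of necessity. Since $R_j^{*m}=P_\clm S_{\Theta_j}^{*m}|_\clm\to 0$ strongly and the $R_j$ doubly commute, iterating $I=\sum_{m<N}R_j^mD_jR_j^{*m}+R_j^NR_j^{*N}$ does give $\clm=\bigvee_k R^k\,\mathrm{ran}\prod_j D_j$. Also, the $S_\Theta$-cyclic subspace generated by a product vector is the tensor product of the one-variable cyclic subspaces. But the remaining claim, about $\mathrm{ran}\prod_j(I-R_jR_j^*)$, is essentially the theorem itself, and you only say you expect it.

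\textbf{The commutation claim is false.} $P_\clm$ need not commute with $S_{\Theta_j}S_{\Theta_j}^*$ or $S_{\Theta_j}^*S_{\Theta_j}$, even when $\clm$ is already a product. Take $\clm=\clw\otimes\clq_{\theta_2}\otimes\cdots\otimes\clq_{\theta_n}$ with $\clw=\eta\clq_\vp$, $\theta_1=\eta\vp$, and $\eta=\vp$ a Blaschke factor with a nonzero zero.
\begin{itemize}
\item Commutation with $S_{\Theta_1}S_{\Theta_1}^*$ would force $P_\clw$ to commute with $k\otimes k$, where $k=1-\overline{\theta_1(0)}\theta_1$ (Lemma \ref{lem: 1 - SS*}). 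That means $P_\clw k\in\{0,k\}$. However, $P_\clw k=\overline{\eta(0)}\,\eta\,(1-\overline{\vp(0)}\vp)\neq 0$, while $P_{\clq_\eta}k=1-\overline{\eta(0)}\eta\neq 0$ shows $k\notin\clw$.
\item Similarly, $P_\clw T_z^*\theta_1=\eta T_z^*\vp\neq 0$ while $P_{\clq_\eta}T_z^*\theta_1=\vp(0)T_z^*\eta\neq 0$. So $P_\clm$ does not commute with $S_{\Theta_1}^*S_{\Theta_1}$ either.
\item Even if such commutation held, lying in the commutant of these one-coordinate operators would not make $P_\clm$ a product projection.
\end{itemize}

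\textbf{The defect-space description is wrong and circular.} If $\vp_j(0)\neq 0$, then $S_{\theta_j}|_{\clw_j}\cong S_{\vp_j}$ is invertible. So $\clw_j\ominus S_{\theta_j}\clw_j=\{0\}$, although $I-R_jR_j^*$ has rank one. In any case, that description presupposes the factors $\clw_j$ you are trying to construct. Note also that $I-R_jR_j^*=I-S_{\Theta_j}P_\clm S_{\Theta_j}^*|_\clm$ has $P_\clm$ sandwiched inside, so it cannot be computed without already knowing $\clm$.

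\textbf{What is missing.} You need a way to split off one coordinate without knowing the product structure in advance. The paper uses the other defect, $I-S_\theta^*S_\theta=T_z^*\theta\otimes T_z^*\theta$, whose range is $S_\theta^*$-cyclic. From this it gets Proposition \ref{reduce}: every reducing subspace of $\clh\otimes\clq_\Theta$ has the form $\cll\otimes\clq_\Theta$. The induction step then runs as follows.
\begin{itemize}
\item Form $\clm_1=\bigvee S_{\Theta_2}^{*m_2}\cdots S_{\Theta_n}^{*m_n}\clm$. By Proposition \ref{model reducing common} it reduces $(S_{\Theta_2},\dots,S_{\Theta_n})$, so $\clm_1=\clw_1\otimes\clq_{\theta_2}\otimes\cdots\otimes\clq_{\theta_n}$.
\item Double commutativity shows that $\clm$ reduces $S_{\Theta_1}|_{\clm_1}$.
\item Proposition \ref{subset reducing}, which uses $S_{\theta_1}|_{\eta\clq_\vp}\cong S_\vp$, then gives $\clm=\clw_1\otimes\cle_1$, with $\cle_1$ a doubly commuting submodule in $n-1$ variables.
\item Induction finishes the proof.
\end{itemize}
Your proposal needs a step of this kind in place of the commutation claim.
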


While this result appears complete, the techniques required to obtain it are fairly involved. In fact, one must undertake a detailed analysis of invariant subspaces, even in the case of classical Jordan blocks. Some of the intermediate results obtained in this process are of independent interest.

A simple extension of the ideas used in the proof of the above theorem also yields representations of all doubly commuting submodules of the mixed space $H^2(\D^{n-r}) \otimes \clq_\Theta$, where $1 \leq r < n$, and $\clq_\Theta = \clq_{\theta_1} \otimes \cdots \otimes \clq_{\theta_r}$ is a Jordan block as above. We emphasize that the origins of these results lie in the earlier work \cite{MR2380073}, which addressed the same characterization problem for doubly commuting submodules of the mixed space $H^2(\D) \otimes \clq_{z^2}$. For a more detailed discussion, see Remark \ref{rem: Nakazi et al}.

The remainder of the paper is organized as follows. In Section \ref{sec: class JB}, we recall some basic facts concerning classical Jordan blocks and compute their defect operators. Section \ref{sec: subm clas JB} is devoted to certain properties of submodules of classical Jordan blocks. In Section \ref{sec: red}, we study  reducing subspaces of Jordan blocks in several variables. Section \ref{sec: submJB} contains the main representation theorems for doubly commuting submodules of Jordan blocks in several variables. In Section \ref{sec: mix sub}, we study such submodules in the setting of mixed Jordan blocks. Finally, Section \ref{sec: unit equiv} addresses the unitary equivalence of the doubly commuting submodules obtained in the preceding sections.

\section{Classical Jordan blocks}\label{sec: class JB}

In this section, we collect some elementary facts about Jordan blocks of $H^2(\D)$. Many of these are known, but we provide full details for completeness. In what follows, we fix a nonconstant inner function $\theta \in H^\infty(\D)$. Our aim is to highlight some distinctive properties of the function $T_z^*\theta$.

The first observation is the following well known fact \cite[Proposition 5.15]{Garcia}:
\begin{equation}\label{eqn: Tztheta}
T_z^{*m} \theta \in \clq_\theta,
\end{equation}
for all $m \geq 1$. Indeed, we have $T_\theta^{*} (T_z^{*m} \theta) = T_z^{*m} T_\theta^* \theta = T_z^{*(m-1)}  T_z^* 1 = 0$. In particular, $T_z^* \theta \in \clq_\theta$. Moreover, it is well-known that $T_z^*\theta$ is a cyclic vector for $S_\theta^*$ (note that $S_\theta^{*m} = T_z^{*m}|_{\clq_\theta}$ for all $m \geq 1$):
\begin{equation}\label{eqn: Tztheta cyclic}
\bigvee_{m=1}^{\infty}T_z^{*m}\theta = \clq_{\theta}.
\end{equation}
To see this, let $f \in \clq_\theta$ satisfying $\langle f, T_z^{*m} \theta \rangle = 0$ for all $m \geq 1$. Define
\[
\clq = \bigvee\{T_z^m \theta, T_z^{*m}\theta: m \in \Z_+\}.
\]
Clearly, $\theta H^2(\D) \subseteq \clq$, and since $f \in \clq_\theta$, it follows that $f \perp \clq$. If $\clq = H^2(\D)$, then $f = 0$ and we are done. Otherwise, since $\clq$ is a quotient module, there exists a nonconstant inner function $\eta \in H^\infty(\D)$ such that $\clq = \clq_\eta$. We are then in the following situation:
\[
\theta \eta \in \theta H^2(\D) \cap \eta H^2(\D) \subseteq \clq_\eta,
\]
which is not possible (note that $\eta H^2(\D) \perp \clq_\eta$), thus yielding that $f = 0$.

We now compute the norm of $T_z^* \theta$. In view of
\begin{equation}\label{eqn: T T*}
T_z T_z^* = I - P_\mathbb{C},
\end{equation}
$P_\mathbb{C}$ being the orthogonal projection of $H^2(\D)$ onto the space of constant functions, and $\|\theta\| = 1$, we have
\[
\|T_z^*\theta\|^2 = \langle T_z T_z^* \theta, \theta \rangle = \langle \theta - \theta(0), \theta \rangle = 1 - \langle \theta(0), \theta \rangle.
\]
Finally, since $\langle \theta(0), \theta \rangle = |\theta(0)|^2$, it follows that
\begin{equation}\label{eqn: norm of Tztheta}
\|T_z^*\theta\|^2 = 1 - |{\theta(0)}|^2.
\end{equation}
Next, we aim to compute defect operators of $S_\theta$. Before proceeding, note that $P_{\clq_\theta} = I - T_\theta T_\theta^*$ and $T_\theta^* 1 = \overline{\theta(0)}$. Hence (also see \cite{HB} and \cite[Lemma 1.1]{MR3272037})
\begin{equation}\label{eqn PQtheta 1}
P_{\clq_\theta} 1 = 1 - \overline{\theta(0)} \theta.
\end{equation}
In what follows, for a pair of vectors $f$ and $g$ in a Hilbert space $\clh$, $f \otimes g$ denotes the rank-one operator defined by $(f \otimes g)h = \langle h, g \rangle f$ for all $h \in \clh$. We are now ready to compute the first defect operator:

\begin{lemma}\label{lem: 1 - SS*}
$I_{\clq_\theta} - S_\theta S_\theta^* = (1 - \overline{\theta(0)} \theta) \otimes (1 - \overline{\theta(0)} \theta)$.
\end{lemma}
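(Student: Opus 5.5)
We compute $S_\theta S_\theta^*$ directly from the definitions. Since $\clq_\theta$ is invariant under $T_z^*$, we have $S_\theta^* = T_z^*|_{\clq_\theta}$. Hence for $f \in \clq_\theta$,
\[
S_\theta S_\theta^* f = P_{\clq_\theta} T_z T_z^* f.
\]
By \eqref{eqn: T T*}, $T_zT_z^* f = f - P_\mathbb{C} f = f - \langle f, 1\rangle 1$. Applying $P_{\clq_\theta}$ and using $P_{\clq_\theta} f = f$ gives
\[
S_\theta S_\theta^* f = f - \langle f, 1 \rangle P_{\clq_\theta} 1.
\]
Therefore
\[
(I_{\clq_\theta} - S_\theta S_\theta^*) f = \langle f, 1\rangle P_{\clq_\theta} 1 .
\]

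Next we rewrite the scalar $\langle f, 1\rangle$ in terms of a vector of $\clq_\theta$. Since $f \in \clq_\theta$ and $P_{\clq_\theta}$ is self-adjoint,
\[
\langle f, 1 \rangle = \langle P_{\clq_\theta} f, 1\rangle = \langle f, P_{\clq_\theta} 1\rangle .
\]
Thus $(I_{\clq_\theta} - S_\theta S_\theta^*) f = \langle f, P_{\clq_\theta}1\rangle P_{\clq_\theta}1$, which says exactly that
\[
I_{\clq_\theta} - S_\theta S_\theta^* = (P_{\clq_\theta}1) \otimes (P_{\clq_\theta}1)
\]
as an operator on $\clq_\theta$.

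Finally, we substitute $P_{\clq_\theta}1 = 1 - \overline{\theta(0)}\theta$ from \eqref{eqn PQtheta 1}, which yields the stated formula. There is no genuine obstacle here. The only points needing care are that $S_\theta^*$ really is the restriction of $T_z^*$, which follows from the $T_z^*$-invariance of $\clq_\theta$, and the move of $P_{\clq_\theta}$ onto $1$ in the inner product, which uses $f\in\clq_\theta$.
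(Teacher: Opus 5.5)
Your proposal is correct and follows the paper's proof: both reduce $I_{\clq_\theta} - S_\theta S_\theta^*$ to $P_{\clq_\theta}P_{\mathbb{C}}|_{\clq_\theta}$ via $S_\theta^* = T_z^*|_{\clq_\theta}$ and \eqref{eqn: T T*}. Both then rewrite $f(0)=\langle f,1\rangle$ as $\langle f, P_{\clq_\theta}1\rangle$ and substitute $P_{\clq_\theta}1 = 1-\overline{\theta(0)}\theta$ from \eqref{eqn PQtheta 1}.
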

\begin{proof}
First, observe that
\[
I_{\clq_\theta} - S_\theta S_\theta^* = I_{\clq_\theta} - P_{\clq_\theta} T_z T_z^*|_{\clq_\theta} = P_{\clq_\theta} P_{\mathbb{C}}|_{\clq_\theta}.
\]
Fix $f \in \clq_\theta$. Note that \eqref{eqn PQtheta 1} implies $P_{\clq_\theta} P_{\mathbb{C}} f = (1 - \overline{\theta(0)} \theta) f(0)$. Also,
\[
f(0) = \langle f, 1 \rangle = \langle f, P_{\clq_\theta} 1 \rangle,
\]
and hence
\[
\begin{split}
(I_{\clq_\theta} - S_\theta S_\theta^*)f & = (1 - \overline{\theta(0)} \theta) \langle f, P_{\clq_\theta} 1 \rangle = (1 - \overline{\theta(0)} \theta) \langle f, 1 - \overline{\theta(0)} \theta \rangle,
\end{split}
\]
where the final identity again follows from \eqref{eqn PQtheta 1}.
\end{proof}

Next, we compute the second defect operator. At this point, we recall that \cite{HB}
\[
\text{rank} (I - S_\theta^* S_\theta) = 1.
\]

\begin{lemma}\label{lem: 1 - S*S}
$I_{\clq_\theta} - S_\theta^* S_\theta = T_z^*\theta \otimes T_z^* \theta$.
\end{lemma}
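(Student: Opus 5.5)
Since $T_z^*T_z = I$, we can write $I_{\clq_\theta} - S_\theta^* S_\theta = P_{\clq_\theta} T_z^*(I - P_{\clq_\theta}) T_z|_{\clq_\theta}$. For $f \in \clq_\theta$ this gives
\[
(I_{\clq_\theta} - S_\theta^*S_\theta)f = P_{\clq_\theta}T_z^* T_\theta T_\theta^* T_z f,
\]
because $I - P_{\clq_\theta} = T_\theta T_\theta^*$. The plan is to identify $T_\theta^* T_z f$ as a constant and then recognize the remaining vector as $T_z^*\theta$.

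\textbf{Step 1: $T_\theta^* T_z f$ is a constant.} We have $T_z^* T_\theta^* T_z f = T_\theta^* f = 0$, since $f \perp \theta H^2(\D)$. Hence $T_\theta^* T_z f \in \ker T_z^* = \mathbb{C}$. The value of this constant is
\[
\langle T_\theta^* T_z f, 1\rangle = \langle z f, \theta\rangle = \langle f, T_z^*\theta\rangle .
\]
Therefore $T_\theta T_\theta^* T_z f = \langle f, T_z^*\theta\rangle\, \theta$.

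\textbf{Step 2: apply $P_{\clq_\theta}T_z^*$.} By Step 1,
\[
(I_{\clq_\theta} - S_\theta^*S_\theta)f = \langle f, T_z^*\theta\rangle\, P_{\clq_\theta} T_z^*\theta .
\]
By \eqref{eqn: Tztheta}, $T_z^*\theta \in \clq_\theta$, so $P_{\clq_\theta} T_z^*\theta = T_z^*\theta$. This yields $(I_{\clq_\theta} - S_\theta^*S_\theta)f = (T_z^*\theta \otimes T_z^*\theta) f$, which is the claimed identity.

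The only point requiring care is Step 1, namely that $T_\theta^*T_z f$ is constant. This follows from the commutation $T_z^* T_\theta^* = T_\theta^* T_z^*$ together with $T_z^*T_z = I$. As a consistency check, the trace of the right-hand side is $\|T_z^*\theta\|^2 = 1 - |\theta(0)|^2$ by \eqref{eqn: norm of Tztheta}, which agrees with the rank-one fact recalled before the lemma.
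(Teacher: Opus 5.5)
Your proof is correct, and it takes a different route from the paper's. The paper evaluates $I_{\clq_\theta} - S_\theta^* S_\theta$ only on the single vector $T_z^*\theta$. Using $T_zT_z^* = I - P_{\mathbb{C}}$ and \eqref{eqn PQtheta 1}, it shows that $T_z^*\theta$ is an eigenvector with eigenvalue $1-|\theta(0)|^2 = \|T_z^*\theta\|^2$. It then invokes the cited fact $\text{rank}(I_{\clq_\theta} - S_\theta^*S_\theta) = 1$, together with positivity, to write the defect as $c\,(T_z^*\theta \otimes T_z^*\theta)$, and compares eigenvalues to get $c=1$. You instead compute the operator on every $f \in \clq_\theta$ at once. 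You rewrite it as $P_{\clq_\theta}T_z^* T_\theta T_\theta^* T_z$, and observe that $T_\theta^* T_z f \in \ker T_z^* = \mathbb{C}$ because $T_z^*$ and $T_\theta^*$ commute and $T_\theta^* f = 0$. Reading off the constant as $\langle f, T_z^*\theta\rangle$ and using \eqref{eqn: Tztheta} then finishes the argument. Your approach is more self-contained: it needs neither the external rank-one fact nor the norm formula \eqref{eqn: norm of Tztheta}, nor the nonconstancy of $\theta$. It also derives the rank-one property rather than assuming it, and it mirrors the direct style the paper uses for Lemma \ref{lem: 1 - SS*}. The paper's argument is shorter once the rank fact is granted, and it makes the eigenvector role of $T_z^*\theta$ explicit. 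Your closing trace remark is superfluous, and a trace value would not by itself confirm the rank; your computation already shows the rank is one.
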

\begin{proof}
In view of \eqref{eqn: T T*} and $P_{\clq_\theta} \theta = 0$, we compute
\[
S_\theta^* S_\theta (T_z^* \theta) = T_z^* P_{\clq_\theta} T_z (T_z^* \theta) = T_z^* P_{\clq_\theta} (I - P_\mathbb{C}) \theta =  T_z^* P_{\clq_\theta} (\theta - \theta(0)) = - \theta(0) T_z^* P_{\clq_\theta} 1.
\]
By \eqref{eqn PQtheta 1} and the fact that $T_z^*1 = 0$, we have
\[
\begin{split}
(I_{\clq_\theta} - S_\theta^* S_\theta) T_z^* \theta = T_z^* \theta + \theta(0) T_z^* (1 - \overline{\theta(0)} \theta) = T_z^* \theta - |\theta(0)|^2  T_z^* \theta = (1 - |\theta(0)|^2) T_z^* \theta.
\end{split}
\]
By \eqref{eqn: norm of Tztheta}, we know that $\|T_z^*\theta\|^2 = 1 - |{\theta(0)}|^2$, which implies that
\[
(I_{\clq_\theta} - S_\theta^* S_\theta) T_z^* \theta = \|T_z^*\theta\|^2 T_z^* \theta.
\]
In particular, $T_z^* \theta \in \text{ran}(I_{\clq_\theta} - S_\theta^* S_\theta)$. As $\text{rank} (I_{\clq_\theta} - S_\theta^* S_\theta)  = 1$, there exists $c > 0$ such that
\[
I_{\clq_\theta} - S_\theta^* S_\theta = c (T_z^*\theta \otimes T_z^* \theta),
\]
and hence $(I_{\clq_\theta} - S_\theta^* S_\theta) T_z^* \theta = c \|T_z^*\theta\|^2 T_z^* \theta$. Comparing this with $(I_{\clq_\theta} - S_\theta^* S_\theta) T_z^* \theta = \|T_z^*\theta\|^2 T_z^* \theta$, we conclude that $c = 1$, which completes the proof of the lemma.
\end{proof}

The representations of defect operators of classical Jordan blocks obtained above will be useful in the computations that follow.

\section{Submodules of classical Jordan blocks}\label{sec: subm clas JB}

We continue with a nonconstant inner function $\theta \in H^\infty(\D)$. This section discusses some structural properties of submodules of $\clq_\theta$. Some of the results are expected to enhance and complement the existing understanding of classical Jordan blocks.

Recall that a closed subspace $\clw \subseteq \clq_\theta$ is called a submodule of $\clq_\theta$ if $S_\theta \clw \subseteq \clw$. We begin with the well-known description of submodules of $\clq_\theta$ \cite{HB}; these are precisely the subspaces of the form $\eta \clq_\vp$, where $\eta$ and $\vp$ are inner functions satisfying
\[
\theta = \eta \vp.
\]
Indeed, for a submodule $\clw \subseteq \clq_\theta$, since $S_\theta^* = T_z^*|_{\clq_\theta}$, it follows that
\[
T_z^*(\clq_\theta \ominus \clw) \subseteq (\clq_\theta \ominus \clw).
\]
In other words, $\clq_\theta \ominus \clw$ is a Jordan block, and hence, there exists an inner function $\eta \in H^\infty(\D)$ such that
\[
\clq_\theta \ominus \clw = \clq_\eta.
\]
In particular, $\clq_\eta \subseteq \clq_\theta$. Equivalently, $\theta H^2(\D) \subseteq \eta H^2(\D)$, or, equivalently, there exists an inner function $\vp \in H^\infty(\D)$ such that
\[
\theta = \eta \vp.
\]
We also have $\clw = \clq_\theta \ominus \clq_\eta$. Since
\[
(I - T_\theta T_\theta^*) - (I - T_\eta T_\eta^*) = T_\eta T_\eta^* - T_\eta T_\vp T_\vp^* T_\eta^* = T_\eta (I - T_\vp T_\vp^*)T_\eta^*,
\]
it follows that $\clw = \eta \clq_\vp$. The above identity also gives the useful identity
\begin{equation}\label{eqn: P_clm}
P_{\eta \clq_\vp} = T_\eta P_{\clq_\vp} T_\eta^*.
\end{equation}

In \eqref{eqn: Tztheta cyclic}, we noted that $T_z^*\theta$ is a cyclic vector for $S_\theta^*$. In what follows, we show that this function possesses several additional interesting properties.
Given a vector $f$ in a Hilbert space $\clh$, we denote by $\mathbb{C}f$ the one-dimensional subspace of $\clh$ generated by $f$. The operator $P_{\mathbb{C}f}$ refers to the orthogonal projection of $\clh$ onto $\mathbb{C}f$.

\begin{lemma}\label{ran}
Let $\clw$ be a nonzero submodule of $\clq_{\theta}$. Then
\[
P_{\clw}(T_z^*\theta) \neq 0.
\]
Moreover,
\[
P_{\clw}(\mathbb{C}(T_z^*\theta))  = \text{ran} (P_{\clw}P_{\mathbb{C}(T_z^*\theta)}P_{\clw}).
\]
\end{lemma}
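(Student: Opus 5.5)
We use the structure of submodules recalled above: $\clw = \eta \clq_\vp$ with $\theta = \eta\vp$ and $\eta,\vp$ inner. Since $\clw \neq 0$, $\vp$ is nonconstant. By \eqref{eqn: P_clm}, $P_\clw = T_\eta P_{\clq_\vp} T_\eta^*$, so
\[
P_\clw(T_z^*\theta) = T_\eta P_{\clq_\vp} T_\eta^* T_z^* (\eta\vp).
\]

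The first step is to compute $T_\eta^* T_z^*(\eta\vp)$. Because $T_z$ and $T_\eta$ commute, $T_\eta^*T_z^* = T_z^*T_\eta^*$. Since $T_\eta$ is an isometry, $T_\eta^*(\eta\vp)=\vp$. Hence $T_\eta^*T_z^*\theta = T_z^*\vp$. By \eqref{eqn: Tztheta}, applied to the inner function $\vp$, this lies in $\clq_\vp$, so the projection $P_{\clq_\vp}$ fixes it. Therefore
\[
P_\clw(T_z^*\theta) = \eta\, T_z^*\vp.
\]
Since $T_\eta$ is an isometry, \eqref{eqn: norm of Tztheta} gives
\[
\|P_\clw(T_z^*\theta)\|^2 = \|T_z^*\vp\|^2 = 1-|\vp(0)|^2 .
\]
This is nonzero: a nonconstant inner function has $|\vp(0)|<1$ by the maximum principle. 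This proves the first assertion.

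For the second assertion, write $g = T_z^*\theta$. Then $P_{\mathbb{C}g} = \|g\|^{-2}\, g\otimes g$, and a direct computation gives
\[
P_\clw P_{\mathbb{C}g} P_\clw = \|g\|^{-2}\,(P_\clw g)\otimes (P_\clw g).
\]
This is a rank-one operator whose range is $\mathbb{C}P_\clw g$, because $P_\clw g \neq 0$ by the first part. The left-hand side $P_\clw(\mathbb{C}g)$ is also $\mathbb{C}P_\clw g$, so the two sets agree.

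The only delicate point is the nonvanishing. It relies on the identification $P_\clw g=\eta T_z^*\vp$ together with $\vp$ being nonconstant; everything else is routine. The nonvanishing is also what makes the ranges one-dimensional rather than zero. (Note $g\neq 0$ too, by \eqref{eqn: norm of Tztheta} for $\theta$, so $P_{\mathbb{C}g}$ is well defined.)
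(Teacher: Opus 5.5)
Your proof is correct and takes essentially the same route as the paper: you write $\clw=\eta\clq_\vp$ and use $P_\clw=T_\eta P_{\clq_\vp}T_\eta^*$ to get $P_\clw T_z^*\theta=\eta T_z^*\vp\neq 0$, then conclude because $P_\clw P_{\mathbb{C}(T_z^*\theta)}P_\clw$ has rank at most one. Your factorization $P_\clw P_{\mathbb{C}g}P_\clw=\|g\|^{-2}(P_\clw g)\otimes(P_\clw g)$ finishes a bit more cleanly than the paper's explicit eigenvalue computation $\bigl(\|T_z^*\vp\|/\|T_z^*\theta\|\bigr)^2$, but the two arguments amount to the same thing.
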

\begin{proof}
Represent $\clw$ as $\clw = \eta\clq_{\varphi}$, where $\theta = \eta\varphi$ for inner functions $\eta, \vp \in H^\infty(\D)$. By \eqref{eqn: P_clm}, we have $P_{\clw} = P_{\eta \clq_\vp} = T_\eta P_{\clq_\vp} T_\eta^*$. Using $P_{\clq_\vp} (T_z^*\vp) = T_z^*\vp$, we obtain
\[
P_\clw T_z^* \theta = T_\eta P_{\clq_\vp} T_\eta^* T_z^* \theta = T_\eta P_{\clq_\vp} T_z^* \vp = T_\eta T_z^* \vp.
\]
Since $\vp$ is nonconstant, it follows that $T_z^* \vp$, and hence $\eta T_z^* \vp$ is nonzero. This implies $P_\clw T_z^* \theta \neq 0$. For the remaining part, for notational simplicity, write $T_z^* \theta = f$. We prove that $P_{\clw}P_{\mathbb{C} (T_z^*\theta)}P_{\clw} (P_\clw f) = \alpha (P_\clw f)$ for some nonzero scalar $\alpha$. We have
\[
\begin{split}
P_{\clw}P_{\mathbb{C} f}P_{\clw} (P_\clw f)  = P_{\clw}P_{\mathbb{C} f}P_{\clw} f = P_{\clw}(P_{\mathbb{C} f} T_\eta T_z^* \vp).
\end{split}
\]
Since $P_{\mathbb{C} f}$ is a rank-one orthogonal projection (where $f = T_z^* \theta$), we have
\[
P_{\mathbb{C} f} (T_\eta T_z^* \vp) = \frac{\langle T_\eta T_z^* \vp, T_z^*\theta \rangle}{\|T_z^* \theta\|^2} T_z^*\theta.
\]
Since $\theta = \eta\varphi$, it follows that
\[
\langle T_\eta T_z^* \vp, T_z^*\theta \rangle = \langle T_\eta T_z^* \vp, T_z^* T_\eta \vp \rangle = \langle T_z^* \vp, T_\eta^* T_z^* T_\eta \vp \rangle = \|T_z^* \vp\|^2,
\]
which yields (again, recall that $f = T_z^* \theta$)
\[
P_{\clw}P_{\mathbb{C} f}P_{\clw} (P_\clw T_z^*\theta) = \bigg(\frac{\|T_z^*\varphi\|}{\|T_z^*\theta\|}\bigg)^2 P_{\clw}T_z^*\theta.
\]
Given that $\varphi$ is nonconstant, we also have $\|T_z^*\varphi\|\neq0$. Since $P_{\clw} P_{\mathbb{C}(T_z^*\theta)} P_{\clw}$ is at most of rank one, we conclude that $P_{\clw}(\mathbb{C}(T_z^*\theta))  = \text{ran} (P_{\clw}P_{\mathbb{C}(T_z^*\theta)}P_{\clw})$.
\end{proof}

In the following, we show that $P_{\clw}T_z^*\theta$ is a cyclic vector in the following sense:

\begin{lemma}\label{full}
Let $\clw$ be a submodule of $\clq_{\theta}$. Then
\[
\clw=\bigvee_{m \in \Z_+}P_{\clw}S^{*m}_{\theta} (P_{\clw}T_z^*\theta).
\]
\end{lemma}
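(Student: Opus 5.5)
Since the zero submodule is trivial, we take $\clw = \eta \clq_\vp$ with $\theta = \eta\vp$ and $\vp$ nonconstant, as in the proof of Lemma \ref{ran}. The plan is to transport everything to $\clq_\vp$ through the isometry $T_\eta$ and then apply the cyclicity statement \eqref{eqn: Tztheta cyclic} with $\vp$ in place of $\theta$. Lemma \ref{ran} already gives
\[
P_\clw T_z^*\theta = T_\eta T_z^*\vp .
\]
By \eqref{eqn: P_clm}, $P_\clw = T_\eta P_{\clq_\vp} T_\eta^*$. Moreover $S_\theta^{*m} = T_z^{*m}|_{\clq_\theta}$ and $\clw \subseteq \clq_\theta$, so for every $m \in \Z_+$
\[
P_\clw S_\theta^{*m} (P_\clw T_z^*\theta) = T_\eta P_{\clq_\vp} T_\eta^* T_z^{*m} T_\eta T_z^* \vp .
\]

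The key step is to simplify $P_{\clq_\vp} T_\eta^* T_z^{*m} T_\eta$ on $\clq_\vp$. Since $T_z^*$ and $T_\eta^*$ commute (both are co-analytic Toeplitz operators), we have $T_\eta^* T_z^{*m} T_\eta = T_z^{*m} T_\eta^* T_\eta = T_z^{*m}$. Hence
\[
P_\clw S_\theta^{*m}(P_\clw T_z^*\theta) = T_\eta P_{\clq_\vp} T_z^{*(m+1)}\vp = T_\eta T_z^{*(m+1)} \vp ,
\]
where the last equality uses \eqref{eqn: Tztheta} for $\vp$, namely $T_z^{*(m+1)}\vp \in \clq_\vp$. (Equivalently, one can note that $\clw$ is $S_\theta^*$-co-invariant in the compressed sense; the direct computation is cleaner.)

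Finally, take the closed span over $m \in \Z_+$. Since $T_\eta$ is an isometry, it maps closed spans to closed spans, so
\[
\bigvee_{m\in\Z_+} P_\clw S_\theta^{*m}(P_\clw T_z^*\theta) = T_\eta \bigvee_{m \ge 1} T_z^{*m}\vp = T_\eta \clq_\vp = \clw ,
\]
where the middle equality is \eqref{eqn: Tztheta cyclic} applied to the nonconstant inner function $\vp$.

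The only potential obstacle is justifying that $T_\eta^*$ commutes with $T_z^{*m}$. This holds because $T_\eta T_z = T_z T_\eta$ and we take adjoints. We must also be careful that the isometry $T_\eta$ commutes with forming closed spans, which it does since it has closed range and is isometric.
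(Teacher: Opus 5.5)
Your proof is correct, but it takes a different route from the paper's.

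\textbf{The paper's argument.} The paper never uses the factorization $\clw=\eta\clq_\vp$. It argues by orthogonality: it takes $f\in\clw$ orthogonal to every $P_\clw S_\theta^{*m}(P_\clw T_z^*\theta)$. It then uses the identity $P_\clw S_\theta^{*m}P_\clw=P_\clw S_\theta^{*m}$, which follows from the $S_\theta$-invariance of $\clw$, to remove the inner projection. This gives $\langle f, T_z^{*(m+1)}\theta\rangle=0$ for all $m\in\Z_+$, and cyclicity \eqref{eqn: Tztheta cyclic} for $\theta$ itself forces $f=0$. That proof is purely operator-theoretic. It is really the general fact that if $x$ is cyclic for $T^*$ and $\clw$ is $T$-invariant, then $P_\clw x$ is cyclic for $(T|_\clw)^*$. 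No function theory is needed beyond cyclicity in $\clq_\theta$.

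\textbf{Your argument.} You use the Beurling-type description $\clw=\eta\clq_\vp$ together with the identity $T_\eta^*T_z^{*m}T_\eta=T_z^{*m}$. This yields the explicit formula $P_\clw S_\theta^{*m}(P_\clw T_z^*\theta)=\eta\, T_z^{*(m+1)}\vp$. You then apply \eqref{eqn: Tztheta cyclic} to $\vp$ rather than to $\theta$.

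\textbf{Comparison.} Your route depends on the factorization $\theta=\eta\vp$, but it gives more information. It shows that the orbit in question is exactly the $T_\eta$-image of the standard orbit of $T_z^*\vp$ in $\clq_\vp$. Equivalently, the unitary $T_\eta:\clq_\vp\to\clw$ intertwines $S_\vp^*$ with $(S_\theta|_\clw)^*$ and sends $T_z^*\vp$ to $P_\clw T_z^*\theta$. This matches the unitary used in Proposition \ref{subset reducing}.
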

\begin{proof}
Set $\cll = \mathbb{C} (P_{\clw}T_z^*\theta)$. Clearly, $\vee_{m \in \Z_+}P_{\clw}S^{*m}_{\theta}\cll\subseteq\clw$. Pick $f \in \clw$ such that $f\perp\vee_{m \in \Z_+}P_{\clw}S^{*m}_{\theta}\cll$. Since $\clw$ is a submodule, it follows that $P_{\clw}S^{*m}_{\theta}P_{\clw} = P_{\clw}S^{*m}_{\theta}$, and hence
\begin{align*}
0=\langle f,P_{\clw}S^{*m}_{\theta}P_{\clw}T_z^*\theta\rangle = \langle f, P_{\clw} S^{*m}_{\theta} T_z^*\theta\rangle = \langle f,S^{*m}_{\theta}T_z^*\theta\rangle = \langle f, T_z^{*(m+1)}\theta\rangle,
\end{align*}
for all $m \in \Z_+$. Since $\vee_{m \in \Z_+}T_z^{*m}\theta = \clq_{\theta}$ (see \eqref{eqn: Tztheta cyclic}), it follows that $f=0$.
\end{proof}

We recall the well-known Fej'{e}r’s theorem: Let $h\in L^1(\T)$ with Fourier expansion 
\[
h \sim \sum_{m \in \Z} \hat{h}(m) e^{imt}.
\]
For $N \in \Z_+$, define the $N$-th Fej\'{e}r mean of the Fourier series of $h$ by
\[
(\sigma_N h)(t)=\sum_{|m|\le N}\Big(1-\frac{|m|}{N+1}\Big) \hat{h}(m) e^{imt}
\]
Then Fej'{e}r’s theorem asserts that $\sigma_Nh\to h$ in $L^1(\T)$.

We also recall a standard consequence of the F. and M. Riesz theorem (equivalently, the boundary uniqueness theorem for Hardy spaces):

\begin{theorem}\label{thm: Riesz}
Let $f_1,f_2\in H^2(\D)$, and suppose that
\[
f_1(e^{it})\overline{f_2(e^{it})}=0,
\]
for a.e. $e^{it} \in \T$. Then either $f=0$ or $g=0$.
\end{theorem}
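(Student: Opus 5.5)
The plan is to prove this by contradiction, using the factorization structure of $H^2(\D)$ functions on the circle. (The statement's conclusion names $f$ and $g$; these mean $f_1$ and $f_2$.)

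Suppose $f_1 \neq 0$ and $f_2 \neq 0$. Then we aim for two facts:
\begin{itemize}
\item[(a)] each $f_i$ vanishes only on a null subset of $\T$;
\item[(b)] consequently $f_1 \overline{f_2}$ vanishes only on a null set, contradicting the hypothesis.
\end{itemize}
Since $Z(f_1\overline{f_2}) = Z(f_1) \cup Z(f_2)$ up to the null sets where boundary values fail to exist, (b) follows immediately from (a). So the whole argument reduces to the boundary uniqueness statement: a nonzero $h \in H^2(\D)$ cannot vanish on a subset of $\T$ of positive measure.

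For the uniqueness statement I would use the standard log-integrability argument. Let $h \in H^2(\D)$ be nonzero, and write $h = z^k g$ with $g(0) \neq 0$; note $|g| = |h|$ a.e. on $\T$. Jensen's inequality applied to the subharmonic function $\log|g|$ on circles of radius $r<1$ gives
\[
\log|g(0)| \le \int_\T \log|g(re^{it})|\,\frac{dt}{2\pi}.
\]
Next, pass to the limit $r \to 1$. Here I would write $\log = \log^+ - \log^-$. The $\log^+$ parts are controlled by $\log^+ x \le x^2$ together with the $L^2$ convergence $g_r \to g$, which gives convergence of $\int \log^+|g_r|$ to $\int \log^+|g|$. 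For the $\log^-$ parts, Fatou's lemma (using a.e. radial convergence along a subsequence) gives $\int \log^-|g| \le \liminf_r \int \log^-|g_r|$. Combining these yields
\[
-\infty < \log|g(0)| \le \int_\T \log|g|\,\frac{dt}{2\pi}.
\]
Therefore $\log|g| \in L^1(\T)$, so $|g| > 0$ a.e., and hence $|h| > 0$ a.e. on $\T$.

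Alternatively, one can quote the F. and M. Riesz theorem directly, or the inner-outer factorization $h = \theta F$ with $|F| = e^{u}$ for $u \in L^1$, which gives the same conclusion.

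The only delicate step is justifying the limit $r \to 1$ inside the logarithmic integral, specifically the Fatou direction for $\log^-$. Since this is classical (see, e.g., \cite{HB}), I would cite it rather than reprove it, and present the statement as a direct corollary: $f_1 \ne 0$ and $f_2 \ne 0$ force $f_1 \overline{f_2} \ne 0$ a.e., contradicting the hypothesis. Hence $f_1 = 0$ or $f_2 = 0$.
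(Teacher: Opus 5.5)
Your proposal is correct and follows the paper's route. The paper gives no argument of its own: it simply recalls the statement as the boundary uniqueness theorem for $H^2(\D)$, a consequence of the F.\ and M.\ Riesz theorem, which is exactly what you reduce to. Your Jensen--Fatou log-integrability sketch is the standard proof of that classical fact, and your reading of the conclusion as ``$f_1=0$ or $f_2=0$'' is the intended one.
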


The following proposition is engrossing, as it parallels the fact that submodules of $H^2(\D)$ cannot be orthogonal.

\begin{proposition}\label{orthogonal}
Let $\clw_1$ and $\clw_2$ be submodules of $\clq_\theta$. If $\clw_1$ is orthogonal to $\clw_2$, then either $\clw_1=\{0\}$ or $\clw_2=\{0\}$.
\end{proposition}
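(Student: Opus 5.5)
Write $\clw_1 = \eta_1 \clq_{\vp_1}$ and $\clw_2 = \eta_2 \clq_{\vp_2}$, where $\theta = \eta_1\vp_1 = \eta_2\vp_2$ and all four functions are inner; this is the description of submodules recalled at the beginning of this section. Assume both subspaces are nonzero, so $\vp_1$ and $\vp_2$ are nonconstant. The aim is to produce two nonzero $H^2$ functions whose boundary product $f_1\overline{f_2}$ vanishes a.e. on $\T$, which contradicts Theorem \ref{thm: Riesz}. The natural test vectors come from Lemma \ref{ran}: $P_{\clw_j}T_z^*\theta = \eta_j T_z^*\vp_j$ is nonzero. Since $\clw_j$ is invariant under $S_\theta$, it is also invariant under $P_{\clq_\theta}T_z^m|_{\clq_\theta}$. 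Better, $\eta_j \clq_{\vp_j}$ contains $\eta_j k$ for every $k\in\clq_{\vp_j}$, and in particular $\eta_j T_z^{*m}\vp_j$ for all $m\ge1$.

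Orthogonality gives $\langle \eta_1 g_1, \eta_2 g_2\rangle = 0$ for all $g_j\in\clq_{\vp_j}$. I want to upgrade this to the vanishing of all Fourier coefficients of a single $L^1$ function. Fix $g_1 = T_z^*\vp_1$. Since $\clw_1$ is $S_\theta$-invariant and $\clw_2 \perp \clw_1$, we have
\[
\langle P_{\clq_\theta} z^m \eta_1 g_1, \eta_2 g_2\rangle = \langle z^m\eta_1 g_1, \eta_2 g_2\rangle = 0
\]
for all $m\ge0$, because $\eta_2 g_2\in\clq_\theta$. Symmetrically, $\langle \eta_1 g_1, z^m \eta_2 g_2\rangle = 0$. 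Hence, with $h = \eta_1 g_1\overline{\eta_2 g_2}\in L^1(\T)$, every Fourier coefficient $\hat h(m)$, $m\in\Z$, vanishes. Both signs are covered: $\hat h(-m)=\langle z^m\eta_1g_1,\eta_2g_2\rangle$ and $\hat h(m)=\langle \eta_1 g_1, z^m\eta_2 g_2\rangle$. One point needs care: $S_\theta^m = P_{\clq_\theta}T_z^m|_{\clq_\theta}$, because $\theta H^2$ is $T_z$-invariant. This justifies replacing $S_\theta^m$ by $T_z^m$ in inner products against vectors of $\clq_\theta$.

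By Fejér's theorem, $h = \lim \sigma_N h = 0$ in $L^1$, so $h=0$ a.e. Since $|\eta_j|=1$ a.e., this gives $g_1\overline{g_2}=0$ a.e. on $\T$, with $g_1 = T_z^*\vp_1 \neq 0$ and $g_2 = T_z^*\vp_2\neq0$ (both $\vp_j$ are nonconstant). Theorem \ref{thm: Riesz} now yields a contradiction, so one of the $\clw_j$ must be zero.

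The main obstacle is the Fourier-coefficient step: one has to check that the compressions $S_\theta^m$ on each side really give all coefficients of $h$ in both directions. If only one direction were available, Fejér alone would not suffice, and one would instead need the fact that an $L^1$ function with all negative (or positive) coefficients zero lies in $H^1$. Using invariance of both $\clw_1$ and $\clw_2$ avoids this.
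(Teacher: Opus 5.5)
Your proof is correct and is essentially the paper's argument: $S_\theta$-invariance of both $\clw_1$ and $\clw_2$, together with their orthogonality, kills every Fourier coefficient of $f_1\overline{f_2}$ in both directions. Fej\'er's theorem then gives $f_1\overline{f_2}=0$ a.e., and Theorem \ref{thm: Riesz} finishes. The only difference is cosmetic: the paper works with an arbitrary nonzero $f_1\in\clw_1$ and arbitrary $f_2\in\clw_2$, so your detour through $\clw_j=\eta_j\clq_{\vp_j}$ and the specific vectors $\eta_jT_z^*\vp_j$ is unnecessary, though harmless.
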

\begin{proof}
Suppose there exists a nonzero element $f_1\in\clw_1$. Pick any $f_2\in\clw_2$ and define
\[
h=f_1\overline{f_2}.
\]
Treat $h \in L^1(\T)$ and write its Fourier expansion as
\[
h \sim \sum_{m \in \Z} \hat{h}(m) e^{imt}.
\]
For any $m \in \mathbb{Z}_+$, we have $S_\theta^m f_2 \in \clw_2$, and hence 
\[
\hat{h}(m)=\int_{\mathbb{T}} f_1(t)\overline{f_2(t)}e^{-imt} d\mu(t) = \langle f_1,z^mf_2\rangle = \langle f_1, S_\theta^m f_2 \rangle = 0.
\]
Similarly, for any $m\le -1$, we have $S_\theta^{-m} f_1 \in \clw_1$, and hence
\[
\hat{h}(m)=\int_{\mathbb{T}} f_1(t)\overline{f_2(t)}e^{-imt} d\mu(t)=\langle z^{-m} f_1, f_2 \rangle = \langle S_\theta^{-m} f_1, f_2 \rangle = 0.
\]
Fej\'{e}r’s Theorem then implies that
\[
h = f_1\overline{f_2}=0,
\]
a.e. on $\T$. Theorem \ref{thm: Riesz} now yields $f_2=0$, and hence we conclude that $\clw_2 = \{0\}$.
\end{proof}

We continue with the cyclicity property. Next, we prove that any submodule of a Jordan block is cyclic under the adjoint of the Jordan block.

\begin{proposition}\label{full space}
Let $\clw$ be a nonzero submodule of $\clq_\theta$. Then
\[
\bigvee_{m \in \Z_+} S^{*m}_\theta \clw=\clq_\theta.
\]
\end{proposition}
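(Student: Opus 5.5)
The plan is to show that the closed subspace $\clm := \bigvee_{m \in \Z_+} S^{*m}_\theta \clw$ is a submodule of $\clq_\theta$ whose orthogonal complement in $\clq_\theta$ is also a submodule. Proposition \ref{orthogonal} will then force that complement to be zero.

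First, $\clm$ is invariant under $S_\theta^*$ by construction, since $S_\theta^*$ maps each $S^{*m}_\theta \clw$ into $S^{*(m+1)}_\theta \clw$. Hence $\clq_\theta \ominus \clm$ is invariant under $S_\theta$, i.e.\ it is a submodule of $\clq_\theta$.

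Next I check that $\clm$ is itself a submodule, i.e.\ $S_\theta \clm \subseteq \clm$. Since $\clw$ is invariant under $S_\theta$, we have $S_\theta \clw \subseteq \clw \subseteq \clm$. For $m \geq 1$ and $w \in \clw$, I use Lemma \ref{lem: 1 - S*S}, which gives
\[
S_\theta S_\theta^{*} = I_{\clq_\theta} - (1-\overline{\theta(0)}\theta)\otimes(1-\overline{\theta(0)}\theta)
\]
(this is actually Lemma \ref{lem: 1 - SS*}). It follows that $S_\theta S_\theta^{*m} w = S_\theta^{*(m-1)} w - c\,(1-\overline{\theta(0)}\theta)$ for a scalar $c$. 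The first term lies in $\clm$, so it suffices to show $k := 1-\overline{\theta(0)}\theta = P_{\clq_\theta}1 \in \clm$ whenever $\clw \neq 0$.

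To get $k\in\clm$, write $\clw = \eta\clq_\vp$ with $\theta = \eta\vp$ and $\vp$ nonconstant. Choose $w\in \clw$ with $w(0)\ne 0$ if possible. Otherwise, let $N$ be the order of vanishing at $0$ common to $\clw$; then $S_\theta^{*N}\clw$ contains a function not vanishing at $0$. Indeed, since $S_\theta^{*}=T_z^*|_{\clq_\theta}$, we get $S_\theta^{*N}w = T_z^{*N}w$, and this is nonzero at $0$ for suitable $w$. Given $u\in\clm$ with $u(0)\ne0$, the identity above gives $S_\theta S_\theta^* u = u - u(0) k$, which rewrites as $k = u(0)^{-1}(u - S_\theta S_\theta^{*}u)$. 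Here $u\in\clm$ and $S_\theta^*u\in\clm$, but deducing $S_\theta S_\theta^* u \in \clm$ is circular, so this needs care. I will instead argue directly with functions. We have $T_z^{*m}w\in\clm$ for all $m$, and $w=\eta g$ with $g\in\clq_\vp$. Choosing $g = T_z^*\vp$, the vectors $T_z^{*m}(\eta T_z^*\vp)$ span $T_z^*$-invariant content. It is cleaner, though, to argue via complements, as follows.

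The cleaner route, which I will actually pursue, avoids showing that $\clm$ is $S_\theta$-invariant. Since $\clm^\perp := \clq_\theta\ominus\clm$ is $S_\theta$-invariant and $\clm\supseteq \clw$, we get $\clm^\perp \perp \clw$ with both $\clm^\perp$ and $\clw$ submodules. Proposition \ref{orthogonal} then gives $\clm^\perp=\{0\}$ because $\clw\neq\{0\}$. Hence $\clm=\clq_\theta$. The main point to verify carefully is just the $S_\theta^*$-invariance of $\clm$ and the orthogonality $\clm^\perp\perp\clw$; both are immediate, so the only real input is Proposition \ref{orthogonal}. I will drop the direct route above and present only this argument.
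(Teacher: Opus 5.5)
Your final argument is correct and is essentially the paper's proof: the span is $S_\theta^*$-invariant, so its complement in $\clq_\theta$ is a submodule orthogonal to $\clw$, and Proposition \ref{orthogonal} forces that complement to be zero. The only cosmetic difference is that you apply Proposition \ref{orthogonal} to the whole complement $\clq_\theta \ominus \clm$ rather than, as the paper does, to the cyclic submodule $\bigvee_{n} S_\theta^n h$ generated by a nonzero $h$ in it. In a write-up, delete the abandoned ``direct route'' paragraphs, which are incomplete.
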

\begin{proof}
As $\clw \subseteq \clq_\theta$, it follows that $\bigvee_{m \in \Z_+} S^{*m}_\theta \clw \subseteq \clq_\theta$. If possible, suppose there is a nonzero $h \in \clq_\theta\ominus \bigvee_{m \in \Z_+} S^{*m}_\theta \clw$. Then
\[
\clw_1:=\bigvee_{n=0}^{\infty}S^n_\theta h,
\]
is a nonzero submodule of $\clq_\theta$. For each $g\in \clw$, and $n\in\mathbb{Z}_+$, we have
\[
\langle g, S^n_\theta h\rangle=\langle S^{*n}_\theta g, h\rangle=0,
\]
which shows that $\clw$ and $\clw_1$ are orthogonal, a contradiction to Proposition \ref{orthogonal}. This completes the proof.
\end{proof}

The above result is in line with the classical fact that submodules of $H^2(\mathbb{D})$ are cyclic subspaces for the backward shift.

\section{Reducing submodules}\label{sec: red}

Let $\clq_{\theta_j}$, $j \in I_n$, be a Jordan block of $H^2(\D)$. Let
\[
\clq_\Theta = \clq_{\theta_1} \otimes \cdots \otimes \clq_{\theta_n}.
\]
For each $j \in I_n$, define $S_{\Theta_j} \in \clb(\clq_\Theta)$ by
\[
S_{\Theta_j} = I_{\clq_{\theta_1}} \otimes \cdots I_{\clq_{\theta_{j-1}}} \otimes \underbrace{S_{\theta_j}}_{j-\text{th}} \otimes I_{\clq_{\theta_{j+1}}} \cdots \otimes I_{\clq_{\theta_n}}.
\]
Given a Hilbert space $\clh$, we define $M_j \in \clb(\clh \otimes \clq_\Theta)$, $j \in I_n$, by
\[
M_j = I_{\clh} \otimes S_{\Theta_j}.
\]
Then $M_{\clh, \Theta} := (M_1, \ldots, M_n)$ on $\clh \otimes \clq_\Theta$ is doubly commuting. A closed subspace $\clm$ of $\clh \otimes \clq_\Theta$ is said to be a \textit{reducing submodule} if $M_j^* \clm, M_j \clm \subseteq \clm$; equivalently,
\[
P_\clm M_j = M_j P_\clm,
\]
for all $j \in I_n$. In the following, we describe all reducing submodules of $\clh \otimes \clq_\Theta$.

\begin{proposition}\label{reduce}
A closed subspace $\clm$ of $\clh \otimes \clq_{\Theta}$ is a reducing submodule if and only if there exists a closed subspace $\cll$ of $\clh$ such that
\[
\clm = \cll \otimes \clq_{\Theta}.
\]
\end{proposition}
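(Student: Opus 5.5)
The plan is to use the defect operators computed in Lemma \ref{lem: 1 - S*S}, together with the cyclicity \eqref{eqn: Tztheta cyclic}, to show that a reducing subspace is determined by its intersection with a ``generating'' slice. The sufficiency direction is immediate: if $\clm = \cll \otimes \clq_\Theta$, then $P_\clm = P_\cll \otimes I_{\clq_\Theta}$, which commutes with every $M_j = I_\clh \otimes S_{\Theta_j}$.

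For necessity, suppose $P_\clm$ commutes with all $M_j$ and $M_j^*$. Then $P_\clm$ commutes with the defect operator
\[
D := \prod_{j=1}^n (I - M_j^* M_j) = I_\clh \otimes (f_1 \otimes f_1) \otimes \cdots \otimes (f_n \otimes f_n),
\]
where $f_j = T_z^*\theta_j$ by Lemma \ref{lem: 1 - S*S}. Up to the positive scalar $\prod_j \|f_j\|^2$, this $D$ is the orthogonal projection onto $\clh \otimes \mathbb{C}(f_1 \otimes \cdots \otimes f_n)$. Write $F = f_1\otimes\cdots\otimes f_n$ and define
\[
\cll = \{h \in \clh : h \otimes F \in \clm\}.
\]
This is a closed subspace. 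Since $P_\clm$ commutes with $D$, it leaves $\operatorname{ran} D = \clh\otimes \mathbb{C}F$ invariant. Hence $P_\clm$ restricted there has the form $A\otimes I$, where $A$ is a projection on $\clh$, and $\clm \cap (\clh\otimes \mathbb{C}F) = \cll \otimes \mathbb{C}F$ with $A = P_\cll$. Equivalently, $P_\clm(h\otimes F) = P_\cll h \otimes F$ for all $h \in \clh$.

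Next apply the adjoints. Because $\clm$ is invariant under each $M_j^*$, we get $h \otimes S_{\theta_1}^{*m_1} f_1 \otimes \cdots \otimes S_{\theta_n}^{*m_n} f_n \in \clm$ for all $h\in\cll$ and $m\in\Z_+^n$. By \eqref{eqn: Tztheta cyclic}, the vectors $S_{\theta_j}^{*m} f_j = T_z^{*(m+1)}\theta_j$ span $\clq_{\theta_j}$, so their tensor products span $\clq_\Theta$ and therefore $\cll\otimes\clq_\Theta \subseteq \clm$. The same argument applied to $\clm^\perp$, which is also reducing, gives $\cll^\perp \otimes \clq_\Theta \subseteq \clm^\perp$, using that $P_{\clm^\perp}(h\otimes F) = P_{\cll^\perp}h\otimes F$. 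Combining the two inclusions, $\clm = \cll\otimes\clq_\Theta$.

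The only delicate step is the identification of $P_\clm$ on $\operatorname{ran} D$ as $P_\cll\otimes I$. This holds because $\operatorname{ran} D \cong \clh$ via $h\mapsto h\otimes F$, so any projection commuting with $D$ restricts to a projection on a copy of $\clh$. This restriction is automatically the projection onto $\cll$, because $\clm\cap\operatorname{ran}D = \operatorname{ran}(P_\clm|_{\operatorname{ran}D})$.
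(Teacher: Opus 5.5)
Your proof is correct and follows the paper's route. You commute $P_\clm$ with $\prod_{j}(I-M_j^*M_j)$, identify $\clm\cap(\clh\otimes\mathbb{C}F)$ as $\cll\otimes\mathbb{C}F$, and use the cyclicity of $T_z^*\theta_j$ under the backward shift to get $\cll\otimes\clq_\Theta\subseteq\clm$. The only difference is in the reverse inclusion: you obtain it by running the same argument on $\clm^\perp$, whereas the paper shows directly that $P_\clm\bigl(h\otimes S_\Theta^{*k}F\bigr)=M^{*k}P_{\tilde\clm}(h\otimes F)$ lies in $\bigvee_k M^{*k}\tilde\clm$.
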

\begin{proof}
If $\clm = \cll \otimes \clq_{\Theta}$ for some closed subspace $\cll$ of $\clh$, then it is clearly a reducing submodule (note that $S_\Theta \in \clb(\clq_\Theta)$). Now, assume that $\clm$ is a reducing submodule, that is, $P_{\clm} M_i^* = M_i^*P_{\clm}$ for all $i \in I_n$. This implies
\[
P_\clm \prod_{i=1}^n (I - M_i^* M_i) = \prod_{i=1}^n (I - M_i^* M_i) P_\clm.
\]
By Lemma \ref{lem: 1 - S*S}, we know that
\[
\prod_{i=1}^n (I - M_i^* M_i) = I_\clh \otimes (T_z^* \theta_1 \otimes T_z^* \theta_1) \otimes \cdots \otimes (T_z^* \theta_n \otimes T_z^* \theta_n).
\]
Since
\[
P_{\mathbb{C} T_z^* \theta_i} = \frac{1}{\|{T^*_z}\theta_i\|^2} (T_z^* \theta_i \otimes T_z^* \theta_i),
\]
for all $i \in I_n$, it follows that
\[
P_\clm (I_\clh \otimes P_{\mathbb{C} T_z^* \theta_1} \otimes \cdots \otimes P_{\mathbb{C} T_z^* \theta_n}) = (I_\clh \otimes P_{\mathbb{C} T_z^* \theta_1} \otimes \cdots \otimes P_{\mathbb{C} T_z^* \theta_n}) P_\clm.
\]
In particular, the above shows that two orthogonal projections commute, and therefore their product is also an orthogonal projection onto $\tilde \clm$, where
\[
\tilde \clm = \clm\cap(\clh \otimes (\mathbb{C} T_z^* \theta_1) \otimes \cdots \otimes (\mathbb{C} T_z^* \theta_n)).
\]
Therefore,
\[
P_{\tilde \clm} = P_\clm (I_\clh \otimes P_{\mathbb{C} T_z^* \theta_1} \otimes \cdots \otimes P_{\mathbb{C} T_z^* \theta_n}) = (I_\clh \otimes P_{\mathbb{C} T_z^* \theta_1} \otimes \cdots \otimes P_{\mathbb{C} T_z^* \theta_n}) P_\clm.
\]
Clearly, there exists a closed subspace $\cll \subseteq \clh$ such that
\[
\tilde \clm = \cll \otimes (\mathbb{C} T_z^* \theta_1) \otimes \cdots \otimes (\mathbb{C} T_z^* \theta_n).
\]
By the cyclicity property \eqref{eqn: Tztheta cyclic}, it follows that
\[
\bigvee_{k \in \Z_+^n} M^{*k} \tilde{\clm} = \cll \otimes \clq_\Theta.
\]
On the other hand, since $\tilde{\clm}\subseteq\clm$ and $\clm$ is a reducing submodule, we get
\[
\bigvee\limits_{k \in\mathbb{Z}_+^n} M^{*k}\tilde{\clm}\subseteq\clm.
\]
To prove that this inclusion is actually an equality, we first observe that
\[
\clq_\Theta = \bigvee_{k \in \Z_+^n} S_\Theta^{*k} (T_z^* \theta_1 \otimes \cdots \otimes T_z^* \theta_n).
\]
This implies
\[
\clh \otimes \clq_\Theta = \bigvee_{h \in \clh, k \in \Z_+^n} h \otimes (S_\Theta^{*k} (T_z^* \theta_1 \otimes \cdots \otimes T_z^* \theta_n)).
\]
For each $h\in\clh$, call
\[
h_\Theta = (h\otimes T_z^*\theta_1\otimes\cdots\otimes T_z^*\theta_n).
\]
Then
\begin{align*}
P_{\clm}(h \otimes (S_\Theta^{*k} (T_z^* \theta_1 \otimes \cdots \otimes T_z^* \theta_n))) & = P_{\clm} M^{*k} h_\Theta
\\
& = M^{*k} P_{\clm} h_\Theta
\\
& = M^{*k} P_\clm (I_\clh \otimes P_{\mathbb{C} T_z^* \theta_1} \otimes \cdots \otimes P_{\mathbb{C} T_z^* \theta_n}) h_\Theta
\\
& = M^{*k} P_{\tilde{\clm}} h_\Theta,
\end{align*}
which implies that $\clm\subseteq \bigvee_{k \in \Z_+^n} M^{*k} \tilde{\clm}$. Therefore, $\clm = \bigvee_{k \in \Z_+^n} M^{*k} \tilde{\clm} = \cll \otimes \clq_{\Theta}$ for some closed subspace $\cll\subseteq\clh$.
\end{proof}

We say that a closed subspace $\clm$ of $\clh \otimes \clq_{\Theta}$ is invariant under $M_{\clh, \Theta}$ if $M_i \clm \subseteq \clm$ for all $i \in I_n$. It is thus natural to refer to $\clm$ as a submodule of $\clh \otimes \clq_\Theta$.

\begin{proposition}\label{model reducing common}
Let $\clm$ be a closed subspace of $\clh \otimes \clq_{\Theta}$.
Define
\[
\tilde \clm = \bigvee\limits_{k \in \Z_+^n} M^{*k} \clm.
\]
If $\clm$ is invariant under $M_{\clh, \Theta}$, then $\tilde \clm$ is a reducing submodule of $\clh \otimes \clq_\Theta$.
\end{proposition}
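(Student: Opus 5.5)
Since $\clh \otimes \clq_\Theta$ carries the doubly commuting tuple $M_{\clh,\Theta}$, the plan is to check directly that $\tilde\clm$ is invariant under each $M_j^*$ and each $M_j$. Throughout, $M^{*k}$ denotes $M_1^{*k_1}\cdots M_n^{*k_n}$ for $k \in \Z_+^n$, and $e_j$ is the $j$-th standard basis vector of $\Z_+^n$.

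\emph{Invariance under $M_j^*$.} This is immediate. Since the $M_i^*$ commute, $M_j^* M^{*k} = M^{*(k+e_j)}$. Hence $M_j^*$ maps each generating set $M^{*k}\clm$ into $M^{*(k+e_j)}\clm \subseteq \tilde\clm$. By linearity and continuity, $M_j^* \tilde\clm \subseteq \tilde\clm$.

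\emph{Invariance under $M_j$.} This is the substantive step. It suffices to show $M_j M^{*k} f \in \tilde\clm$ for $f \in \clm$ and $k \in \Z_+^n$. By double commutativity, $M_j$ commutes with $M_i^*$ for $i \neq j$, so
\[
M_j M^{*k} f = \Big(\prod_{i\ne j} M_i^{*k_i}\Big) M_j M_j^{*k_j} f.
\]
Since $\tilde\clm$ is already known to be $M_i^*$-invariant, it is enough to prove that $M_j M_j^{*m} f \in \tilde\clm$ for every $m \ge 0$.
\begin{itemize}
\item For $m = 0$ this is $M_j f \in \clm \subseteq \tilde\clm$, by the invariance of $\clm$.
\item For $m \ge 1$, write $M_j M_j^{*m} = (M_j M_j^*) M_j^{*(m-1)}$. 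By Lemma \ref{lem: 1 - SS*},
\[
M_j M_j^* = I - I_\clh \otimes I \otimes \cdots \otimes (g_j \otimes g_j) \otimes \cdots \otimes I,
\qquad g_j = 1 - \overline{\theta_j(0)}\theta_j .
\]
\end{itemize}
So it remains to show that for $h = M_j^{*(m-1)} f \in \tilde\clm$, the vector $D_j h := (I - M_j M_j^*) h$ lies in $\tilde\clm$.

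\emph{Key difficulty: the defect term.} The obstacle is to show that $\tilde\clm$ is invariant under the defect operator $D_j = I - M_jM_j^*$, a rank-one-in-the-$j$-th-slot projection. I would use the identity $S_\theta^* g = S_\theta^* P_{\clq_\theta}1 = -\overline{\theta(0)}\, T_z^*\theta$, together with the explicit form
\[
I - S_\theta^* S_\theta = T_z^*\theta \otimes T_z^*\theta
\]
from Lemma \ref{lem: 1 - S*S}, to relate $D_j$ to $M_j^*$ and $M_j$.

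A cleaner first attempt avoids $D_j$. Write $M_j M_j^{*m} f = M_j^{*(m-1)}\big(M_j^{*}\ldots\big)$, or, better, use an intertwining in one variable: for a submodule $\clw$ of $\clq_\theta$, Proposition \ref{full space} already gives $\bigvee_m S_\theta^{*m}\clw = \clq_\theta$. This suggests working slot-wise. Fix all variables other than $j$ and view $\clh\otimes\clq_\Theta$ as $\clk \otimes \clq_{\theta_j}$. One then shows that $\bigvee_m (I\otimes S_{\theta_j}^{*m})\clm$ is invariant under $I\otimes S_{\theta_j}$. In one variable, this reduces to checking
\[
S_\theta S_\theta^{*m} w \in \bigvee_{l} S_\theta^{*l}\clw \quad \text{for } w \in \clw,
\]
which holds by the identity $S_\theta S_\theta^{*} = I - g\otimes g$. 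Indeed, $g$ is a multiple of $P_{\clq_\theta}1$, and the span $\bigvee_l S_\theta^{*l}\clw$ is $S_\theta^*$-invariant with $S_\theta^*$-cyclic pieces. So one needs $\langle S_\theta^{*(m-1)}w, g\rangle g$ to lie in the span whenever it is nonzero.

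The plan for that last point is as follows.
\begin{itemize}
\item If $\langle S_\theta^{*(m-1)}w, g\rangle \neq 0$, then $S_\theta^{*(m-1)}w$ has nonzero value at $0$.
\item By a Wold/model-space argument in the vector-valued setting, the smallest $S^*$-invariant subspace containing a vector with nonzero constant term, taken together with its forward images, contains $g$.
\item I expect this to follow from Proposition \ref{orthogonal}–type reasoning applied to the vector-valued version of the span.
\end{itemize}
I flag this vector-valued step as the main obstacle, and would handle it first.
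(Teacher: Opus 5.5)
Your preliminary reductions are correct and match the paper's. $M_j^*$-invariance is immediate. Double commutativity reduces $M_j$-invariance to $M_jM_j^{*m}f\in\tilde\clm$ for $f\in\clm$. Lemma \ref{lem: 1 - SS*} then reduces this to showing that the defect term $D_jM_j^{*(m-1)}f$ lies in $\tilde\clm$. But that last step is the whole content of the proposition, and you leave it open. Moreover, the route you sketch for it does not work:
\begin{itemize}
\item The \emph{reduction to one variable} is not a reduction. An $(I\otimes S_{\theta_j})$-invariant subspace of $\mathcal{K}\otimes\clq_{\theta_j}$ need not be a tensor product. The one-variable statement is trivial anyway, since $\bigvee_l S_\theta^{*l}\clw=\clq_\theta$ by Proposition \ref{full space}.
\item Reasoning in the spirit of Proposition \ref{orthogonal} breaks down in the vector-valued setting: $k_1\otimes\clq_\theta$ and $k_2\otimes\clq_\theta$ are nonzero orthogonal invariant subspaces whenever $k_1\perp k_2$.
\item What must be shown is that a \emph{specific} elementary tensor $w\otimes g_j$, with $w\in\mathcal{K}$ determined by $M_j^{*(m-1)}f$, lies in $\tilde\clm$. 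Knowing that some vector has nonzero value at $0$ does not produce it.
\item Invoking \emph{forward images} of $S^*$-images is circular, because their membership in $\tilde\clm$ is exactly what is to be proved.
\end{itemize}

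The missing idea is to apply the \emph{other} defect identity, Lemma \ref{lem: 1 - S*S}, to forward images of $\clm$, which are known to stay in $\clm$. Identify the space with $\mathcal{K}\otimes\clq_{\theta_j}$ so that $M_j=I\otimes S_{\theta_j}$. For $f$ in it and $u\in\clq_{\theta_j}$, define the slice $f_u\in\mathcal{K}$ by $\langle f_u,k\rangle=\langle f,k\otimes u\rangle$ for $k\in\mathcal{K}$. Then $u\mapsto f_u$ is conjugate-linear with $\|f_u\|\le\|f\|\,\|u\|$, and
\[
(I\otimes(a\otimes b))f=f_b\otimes a,\qquad ((I\otimes X)f)_u=f_{X^*u}.
\]
For $f\in\clm$ and $r\ge1$ we have $M_j^{r-1}f\in\clm$ and $M_j^*M_j^rf\in\tilde\clm$, hence
\[
(I-M_j^*M_j)M_j^{r-1}f=f_{T_z^{*r}\theta_j}\otimes T_z^*\theta_j\in\tilde\clm .
\]
Applying powers of $M_j^*$ and using the cyclicity \eqref{eqn: Tztheta cyclic} gives $f_{T_z^{*r}\theta_j}\otimes u\in\tilde\clm$ for all $u\in\clq_{\theta_j}$. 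By continuity of $v\mapsto f_v$ and the same density, $f_v\otimes u\in\tilde\clm$ for all $u,v\in\clq_{\theta_j}$. In particular $D_jM_j^{*(m-1)}f=f_{S_{\theta_j}^{m-1}g_j}\otimes g_j\in\tilde\clm$, which is what you need.

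This is the paper's argument. The paper handles the final step through the Parseval-frame expansion $u=\sum_{r\ge1}\langle u,T_z^{*r}\theta\rangle T_z^{*r}\theta$ and a Cauchy--Schwarz estimate, but the continuity argument above suffices.
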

\begin{proof}
Clearly, $\tilde \clm$ is $M^*_i$-invariant, $i \in I_n$. Fix an $i\in I_n$. It is enough to prove that $\tilde \clm$ is $M_i$-invariant. Since $(M_1, \ldots, M_n)$ doubly commutes, for $k = (k_1, \ldots, k_n) \in \Z_+^n$, we have
\[
M_i M^{*k} = M^{* k - k_ie_i} M_i M_i^{*k_i}.
\]
Therefore, it is enough to prove that
\[
M_i M_i^{*l} \clm \subseteq \tilde \clm,
\]
for all $l \ge 1$. To prove this, we first define
\[
\clh_i:=\clq_{\theta_i}\otimes\clh\otimes\clq_{\theta_1}\otimes\cdots\otimes\clq_{\theta_{i-1}}\otimes\clq_{\theta_{i+1}}\otimes\cdots\clq_{\theta_n},
\]
and identify $\clh\otimes\clq_\Theta$ with $\clh_i$ via the unitary $U: \clh\otimes\clq_\Theta \raro \clh_i$ defined by
\[
U(h\otimes q_1\otimes\cdots\otimes q_n)=q_i\otimes h\otimes q_1\otimes\cdots\otimes q_{i-1}\otimes q_{i+1}\otimes\cdots q_n,
\]
for all $h\in\clh$ and $q_j\in \clq_{\theta_j}$, and $j\in I_n$. It is now enough to prove that
\[
A_i A_i^{*l} (U\clm) \subseteq  U\tilde \clm,
\]
for all $l \ge 1$, where
\[
A_i:=UM_iU^*=S_{\theta_i}\otimes I_\clh\otimes I_{\clq_{\theta_1}}\otimes\cdots\otimes I_{\clq_{\theta_{i-1}}}\otimes I_{\clq_{\theta_{i+1}}}\otimes\cdots I_{\clq_{\theta_n}}.
\]
To this end, fix $l \geq 1$ and $g \in \clm$. Then $Ug\in U \clm \subseteq \clh_i$. Let $\{f_m:m\in \mathbb{N}\}$ and $\{e_m:m\in\mathbb{N}\}$ be orthonormal bases for $\clq_{\theta_i}$ and $\clh\otimes\clq_{\theta_1}\otimes\cdots\otimes\clq_{\theta_{i-1}}\otimes\clq_{\theta_{i+1}}\otimes\cdots\clq_{\theta_n}$, respectively. There exist scalars $\{c_{jm}\}_{j,m=1}^\infty$ such that
\[
Ug= \sum _{j,m=1}^{\infty} c_{jm}f_j\otimes e_m
=\sum _{m=1}^{\infty} \Big(\sum _{j=1}^{\infty}c_{jm} f_j\Big)\otimes e_m=\sum _{m=1}^{\infty} g_m\otimes e_m,
\]
where
\[
g_m := \sum _{j=1}^{\infty}c_{jm}f_j,
\]
for all $m \geq 1$. Let $r \geq 1$ be arbitrary. Since $\clm$ is $M_i$-invariant, it follows that $U \clm$ is $A_i$-invariant, and
\begin{align}\label{inv}
A^r_i Ug = \sum_{m=1}^{\infty} S^r_{\theta_i}g_m \otimes e_m \in U \clm.
\end{align}
Since $\tilde \clm$ is $M_i^*$-invariant, $U\tilde \clm$ is $A_i^*$-invariant, and hence
\[
A_i^* A^r_i Ug \in U\tilde \clm.
\]
Also, by Lemma \ref{lem: 1 - S*S}, we know that $I_{\clq_{\theta_i}} - S^*_{\theta_i} S_{\theta_i} = T_z^*\theta_i \otimes T_z^*\theta_i$. Therefore,
\begin{align*}
A_i^* A^r_i Ug & = \sum_{m=1}^{\infty} (S^*_{\theta_i}S_{\theta_i}) S^{r-1}_{\theta_i} g_m \otimes e_m = \sum_{m=1}^{\infty} (S^{r-1}_{\theta_i}g_m - \langle S^{r-1}_{\theta_i}g_m, T_z^*\theta_i \rangle T_z^*\theta_i)\otimes e_m.
\end{align*}
As $A_i^* A^r_i Ug \in U \tilde \clm$ and $\sum_{m=1}^{\infty} S^{r-1}_{\theta_i}g_m \otimes e_m \in U\clm \subseteq U\tilde{\clm}$ (see \eqref{inv} above), it follows that
\begin{align*}
U \tilde \clm & \ni \sum_{m=1}^{\infty} \langle S^{r-1}_{\theta_i} g_m, T_z^*\theta_i\rangle T_z^*\theta_i \otimes e_m = T_z^*\theta_i \otimes \sum_{m=1}^{\infty} \langle g_m, T_z^{*r} \theta_i \rangle e_m.
\end{align*}
Moreover, as $A_i^* U\tilde{\clm} \subseteq U \tilde{\clm}$ and $\bigvee_{m \geq 1} T^{*m}_z \theta_i = \clq_{\theta_i}$ (see \eqref{eqn: Tztheta cyclic}), the above implies
\begin{align}\label{1}
f \otimes \sum_{m=1}^{\infty} \langle g_m, T_z^{*r} \theta_i \rangle e_m \in U\tilde{\clm},
\end{align}
for all $f\in\clq_{\theta_i}$ and $r \geq 1$. Now we turn to proving that
\[
A_i A_i^{*l} Ug \in U \tilde{\clm},
\]
where $l$ is a fixed natural number. Recall from Lemma \ref{lem: 1 - SS*} that $I_{\clq_{\theta_i}} - S_{\theta_i}S_{\theta_i}^* = \hat{\theta}_i \otimes \hat{\theta}_i$, where
\[
\hat{\theta}_i = 1-\overline{\theta_i(0)}\theta_i,
\]
for all $i \in I_n$. In particular, \eqref{1} implies
\begin{equation}\label{eqn: hir}
h_{ir} := \hat{\theta}_i \otimes\sum_{m=1}^{\infty} \langle g_m, T_z^{*r} \theta_i \rangle e_m \in U \tilde{\clm},
\end{equation}
for all $r \geq 1$. Writing $A_i A_i^{*l} = (A_i A_i^*)A_i^{*l-1}$, we compute
\begin{align*}
A_i A_i^{*l} Ug & = \sum\limits_{m=1}^{\infty} (S_{\theta_i} S^*_{\theta_i}) S^{*{l-1}}_{\theta_i} g_m \otimes e_m
\\
& = A^{*{l-1}}Ug - \sum\limits_{m=1}^{\infty}\langle S^{*{l-1}}_{\theta_i}g_m, \hat{\theta}_i \rangle \hat{\theta}_i \otimes e_m
\\
& = A^{*{l-1}}Ug - \hat{\theta}_i \otimes \sum\limits_{m=1}^{\infty}\langle S^{*{l-1}}_{\theta_i}g_m, \hat{\theta}_i \rangle e_m.
\end{align*}
Since $A^{*{l-1}}Ug \in U \tilde{\clm}$, the assertion that $A_i A_i^{*l} Ug \in U \tilde{\clm}$  will be proved if we can show that
\[
h:= \hat{\theta}_i \otimes \sum\limits_{m=1}^{\infty} \langle S^{*{l-1}}_{\theta_i}g_m, \hat{\theta}_i \rangle e_m \in U\tilde{\clm}.
\]
To this end, we recall another known fact about model spaces:
\[
f = \sum_{p=1}^{\infty} \langle f, T_z^{*p}\theta \rangle T_z^{*p} \theta,
\]
and
\[
\|f\|^2=\sum_{p=1}^{\infty} |\langle f, T_z^{*p}\theta \rangle|^2
\]
for all $f \in \clq_\theta$ \cite[Proposition 5.15]{Garcia}. In view of this, for each $m \geq 1$, we compute
\[
\begin{split}
\langle S^{*{l-1}}_{\theta_i}g_m, \hat{\theta}_i \rangle & = \langle g_m, S^{{l-1}}_{\theta_i} \hat{\theta}_i \rangle
\\
& = \langle g_m, \sum_{r=1}^\infty \langle S^{{l-1}}_{\theta_i} \hat{\theta}_i, T_z^{*r} \theta_i \rangle T_z^{*r} \theta_i \rangle
\\
& = \sum_{r=1}^{\infty} \overline{\langle S^{{l-1}}_{\theta_i} \hat{\theta}_i, T_z^{*r} \theta_i \rangle} \langle g_m, T_z^{*r} \theta_i \rangle
\\
& = \sum_{r=1}^{\infty} c_{ilr} \langle g_m, T_z^{*r} \theta_i \rangle,
\end{split}
\]
where
\[
c_{ilr} = \overline{\langle S^{{l-1}}_{\theta_i} \hat{\theta}_i, T_z^{*r} \theta_i \rangle}.
\]
Therefore,
\begin{align*}
h & =  \hat{\theta}_i \otimes \sum\limits_{m=1}^{\infty} \langle S^{*{l-1}}_{\theta_i}g_m, \hat{\theta}_i \rangle e_m =  \hat{\theta}_i \otimes \sum\limits_{m=1}^{\infty} \sum_{r=1}^{\infty} c_{ilr} \langle g_m, T_z^{*r} \theta_i \rangle  e_m
\end{align*}
Let
\[
b_r:=\Big\|\sum_{m=1}^{\infty}\langle g_m, T_z^{*r}\theta_i\rangle e_m\Big\|.
\]	
We will prove that $(b_r)_{r\ge 1}\in l^2$. By Tonelli's theorem,
\begin{align*}
\sum_{r=1}^{\infty}|b_r|^2 =\sum_{r=1}^{\infty}\Big\|\sum_{m=1}^{\infty}\langle g_m, T_z^{*r}\theta_i\rangle e_m\Big\|^2 =\sum_{r=1}^{\infty}\sum_{m=1}^{\infty}|\langle g_m, T_z^{*r}\theta_i\rangle|^2 =\sum_{m=1}^{\infty}\sum_{r=1}^{\infty}|\langle g_m, T_z^{*r}\theta_i\rangle|^2,
\end{align*}
that is,
\begin{align*}
\sum_{r=1}^{\infty}|b_r|^2 =\sum_{m=1}^{\infty}\|g_m\|^2=\|g\|^2.
\end{align*}
Since
\[
\sum_{r=1}^{\infty}|c_{ilr}|^2=\|S^{{l-1}}_{\theta_i} \hat{\theta}_i\|^2,
\]
Cauchy- Schwarz inequality implies
\[
\sum_{r=1}^{\infty} |c_{ilr}|\Big\|\sum\limits_{m=1}^{\infty} \langle g_m, T_z^{*r} \theta_i \rangle  e_m \Big\|<\infty.
\]
Let $v_r:=\sum\limits_{m=1}^{\infty} \langle g_m, T_z^{*r} \theta_i \rangle  e_m$. Then,
\[
\Big\|\sum_{r=1}^{\infty}c_{ilr}v_r\Big\|\le \sum_{r=1}^{\infty}|c_{ilr}|\|v_r\|<\infty
\]
gives that (recall that $i$ and $l$ are fixed)
\[
f:=\sum_{r=1}^{\infty}c_{ilr}v_r\in \clh\otimes\clq_{\theta_1}\otimes\cdots\otimes\clq_{\theta_{i-1}}\otimes\clq_{\theta_{i+1}}\otimes\cdots\clq_{\theta_n}.
\]
Since $\{e_m:m\in \mathbb{N}\}$ is an orthonormal basis for $\clh\otimes\clq_{\theta_1}\otimes\cdots\otimes\clq_{\theta_{i-1}}\otimes\clq_{\theta_{i+1}}\otimes\cdots\clq_{\theta_n}$, we get
\[
\langle f,e_m\rangle=\Big\langle \sum_{r=1}^{\infty}c_{ilr}v_r,e_m\Big\rangle=\sum_{r=1}^{\infty}c_{ilr}\langle v_r,e_m \rangle=\sum_{r=1}^{\infty}c_{ilr}\langle g_m,T_z^{*r}\theta_i \rangle,
\]
which further yields
\[
f=\sum\limits_{m=1}^{\infty}\langle f,e_m\rangle e_m=\sum\limits_{m=1}^{\infty}\sum_{r=1}^{\infty}c_{ilr}\langle g_m,T_z^{*r}\theta_i \rangle e_m.
\]
Therefore,
\[
\sum_{r=1}^{\infty}c_{ilr}\sum\limits_{m=1}^{\infty} \langle g_m, T_z^{*r} \theta_i \rangle  e_m=\sum_{r=1}^{\infty}c_{ilr}v_r=f=\sum\limits_{m=1}^{\infty}\sum_{r=1}^{\infty}c_{ilr}\langle g_m,T_z^{*r}\theta_i \rangle e_m.
\]
Finally, we have
\begin{align*}
h & =  \hat{\theta}_i \otimes \sum\limits_{m=1}^{\infty} \sum_{r=1}^{\infty} c_{ilr} \langle g_m, T_z^{*r} \theta_i \rangle  e_m
\\
& =  \hat{\theta}_i \otimes\sum\limits_{r=1}^{\infty} c_{ilr} \Big(\sum_{m=1}^{\infty} \langle g_m, T_z^{*r} \theta_i \rangle e_m \Big)
\\
& = \sum\limits_{r=1}^{\infty} c_{ilr} \Big(\hat{\theta}_i \otimes\sum_{m=1}^{\infty} \langle g_m, T_z^{*r} \theta_i \rangle  e_m \Big)
\\
& = \sum\limits_{r=1}^{\infty} c_{ilr} h_{ir},
\end{align*}
where $h_{ir}$'s are as in \eqref{eqn: hir}. Since all $h_{ir}$ are in $U\tilde{\clm}$, we finally conclude that $h \in U\tilde{\clm}$.
\end{proof}

The cyclicity property outlined above will play a key role in determining the structure of submodules of Jordan blocks.

\section{Submodules of Jordan blocks}\label{sec: submJB}
	
This section presents representations of doubly commuting submodules of Jordan blocks of $H^2(\D^n)$. We begin with yet another result on reducing subspaces. Fix inner functions $\eta$ and $\vp$ from $H^\infty(\D)$ with $\theta=\eta\vp$. Given a Hilbert space $\clh$, we consider the space $\eta\clq_{\varphi} \otimes \clh$ and the operator
\[
S:= S_\theta|_{\eta\clq_{\varphi}} \otimes I_\clh,
\]
on $\eta\clq_{\varphi} \otimes \clh$. The following result reveals the structure of reducing subspaces of $S$. This result is essentially Proposition \ref{reduce}, but interpreted in light of the unitary equivalence of submodules of Jordan blocks with the Jordan block itself \cite[page 38]{HB}.

\begin{proposition}\label{subset reducing}
Let $\clm$ be a closed subspace of $\eta\clq_{\varphi} \otimes \clh$. Then $\clm$ reduces $S$ if and only if there exists a closed subspace $\cle$ of $\clh$ such that
\[
\clm = \eta\clq_{\varphi} \otimes\cle.
\]
\end{proposition}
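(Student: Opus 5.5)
The plan is to reduce to Proposition \ref{reduce} with $n=1$ via the classical unitary equivalence between $S_\theta|_{\eta\clq_\vp}$ and $S_\vp$. The easy direction is immediate. If $\clm = \eta\clq_\vp\otimes\cle$, then $P_\clm = P_{\eta\clq_\vp}\otimes P_\cle$. Since $\eta\clq_\vp$ is $S_\theta$-invariant, the operator $S$ is well defined, and it commutes with $I\otimes P_\cle = P_\clm$. So $\clm$ reduces $S$.

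For the converse, define $V: \clq_\vp \to \eta\clq_\vp$ by $Vf = \eta f$. This is unitary, because $T_\eta$ is an isometry and maps $\clq_\vp$ onto $\eta\clq_\vp$. The key identity is
\[
S_\theta|_{\eta\clq_\vp} V = V S_\vp .
\]
To check it, take $f\in\clq_\vp$. Then $S_\theta(\eta f) = P_{\clq_\theta}(z\eta f)$. Since $z\eta f\in \eta H^2(\D)$ and $\clq_\theta\cap \eta H^2(\D) = \eta\clq_\vp$, the decomposition $\eta H^2(\D) = \eta\clq_\vp \oplus \theta H^2(\D)$ shows that $P_{\clq_\theta}$ acts on $\eta H^2(\D)$ as $P_{\eta\clq_\vp}$. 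By \eqref{eqn: P_clm},
\[
P_{\eta\clq_\vp}(z\eta f) = T_\eta P_{\clq_\vp}T_\eta^* T_\eta (zf) = \eta P_{\clq_\vp}(zf) = V S_\vp f .
\]

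Now let $W = V\otimes I_\clh : \clq_\vp\otimes\clh \to \eta\clq_\vp\otimes\clh$. Then $W^* S W = S_\vp\otimes I_\clh$, so $\clm$ reduces $S$ if and only if $W^*\clm$ reduces $S_\vp\otimes I_\clh$. We then apply Proposition \ref{reduce} with $n=1$, $\Theta=\vp$, after flipping the tensor factors by the unitary $\clq_\vp\otimes\clh\cong\clh\otimes\clq_\vp$. This needs $\vp$ nonconstant. (If $\vp$ is a unimodular constant, then $\eta\clq_\vp=\{0\}$, which we treat as a degenerate case, or exclude as the paper implicitly does.) The proposition gives $W^*\clm = \clq_\vp\otimes\cle$ for a closed subspace $\cle\subseteq\clh$, and applying $W$ yields $\clm = \eta\clq_\vp\otimes\cle$.

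The only real content is the intertwining identity above. The point to be careful about is that $P_{\clq_\theta}$ and $P_{\eta\clq_\vp}$ agree on $\eta H^2(\D)$. This holds because $\clq_\theta\ominus\eta\clq_\vp = \clq_\eta$, and $\clq_\eta\perp\eta H^2(\D)$.
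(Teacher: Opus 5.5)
Your proposal is correct and follows essentially the same route as the paper: the unitary $f\otimes h\mapsto \eta f\otimes h$ intertwines $S_\vp\otimes I_\clh$ with $S$ via $S_\theta(\eta f)=\eta S_\vp f$, and Proposition \ref{reduce} with $n=1$ finishes. You justify that identity by noting that $P_{\clq_\theta}$ agrees with $P_{\eta\clq_\vp}$ on $\eta H^2(\D)$, whereas the paper uses the $S_\theta$-invariance of $\eta\clq_\vp$ and \eqref{eqn: P_clm}; your handling of the tensor-factor flip and of the degenerate case where $\vp$ is constant fills in details the paper leaves implicit.
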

\begin{proof}
Recall from \eqref{eqn: P_clm} that $P_{\eta\clq_{\varphi}} = T_\eta P_{\clq_{\varphi}} T_\eta^*$. Therefore, for $f\in\clq_{\varphi}$, we have
\begin{align*}
S_\theta|_{\eta\clq_{\varphi}} (\eta f) = P_{\eta\clq_{\varphi}}S_{\theta}|_{\eta\clq_{\varphi}} (\eta f) = T_\eta P_{\clq_\varphi} T^*_\eta z\eta f =\eta P_{\clq_\varphi} z f =\eta S_\varphi f.
\end{align*}
Observe that $U(f\otimes h)=\eta f\otimes h$, for all $f \in \clq_\vp$ and $h \in \clh$, defines a unitary operator $U:\clq_\varphi\otimes\clh \to \eta\clq_{\varphi}\otimes\clh$. Using the above identity, we also have:
\begin{align*}
U(S_\varphi\otimes I_\clh)(f\otimes h)= (\eta S_\varphi f)\otimes h = S_{\theta}|_{\eta\clq_{\varphi}} (\eta f) \otimes h = S (\eta f\otimes h)= S U(f\otimes h),
\end{align*}
for all $f \in \clq_\vp$ and $h \in \clh$, and hence
\[
U(S_\varphi\otimes I_\clh) = S U.
\]
Now we consider the closed subspace $\clm \subseteq \eta\clq_{\varphi} \otimes \clh$. If $\clm = \eta\clq_{\varphi} \otimes\cle$ for some closed subspace $\cle \subseteq \clh$, then clearly $\clm$ reduces $S$. For the reverse direction, assume that $\clm$ reduces $S$. By Proposition \ref{reduce}, there exists a closed subspace $\cle \subseteq \clh$ such that $U^* \clm = \clq_\varphi\otimes\cle$. Therefore, $\clm=U(\clq_\varphi\otimes\cle)=\eta\clq_\varphi\otimes\cle$, which completes the proof of the proposition.
\end{proof}

With this, we are now ready to represent doubly commuting submodules of Jordan blocks:

\begin{theorem}\label{main 0}
Let $\clm$ be a submodule of $\clq_\Theta$. Then $\clm$ is doubly commuting if and only if there exist submodules $\clw_{i}$ of $\clq_{\theta_i}$ such that
\[
\clm=\clw_1\otimes\cdots\otimes\clw_n.
\]
\end{theorem}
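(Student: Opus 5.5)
The easy direction comes first. If $\clm=\clw_1\otimes\cdots\otimes\clw_n$ with each $\clw_i$ a submodule of $\clq_{\theta_i}$, then
\[
R_i = I_{\clw_1}\otimes\cdots\otimes S_{\theta_i}|_{\clw_i}\otimes\cdots\otimes I_{\clw_n}.
\]
Operators acting on different tensor factors doubly commute, so $R_{\Theta,\clm}$ is doubly commuting.

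For the converse, I would induct on the coordinates and peel off one tensor factor at a time, using Proposition \ref{model reducing common} and Proposition \ref{subset reducing}. Fix $i=1$ and view $\clq_\Theta=\clq_{\theta_1}\otimes\clh$ with $\clh=\clq_{\theta_2}\otimes\cdots\otimes\clq_{\theta_n}$. Regard $\clm$ as invariant under $S_{\theta_1}\otimes I_\clh$, and let
\[
\clw_1=\bigvee\{ (I\otimes \langle\cdot, h\rangle)\, g : g\in\clm,\ h\in\clh\}
\]
be the closed span of all "first slices" of $\clm$. This is a submodule of $\clq_{\theta_1}$, and $\clm\subseteq\clw_1\otimes\clh$. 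Write $\clw_1=\eta_1\clq_{\vp_1}$.

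The goal is to show that $\clm$ reduces $S:=S_{\theta_1}|_{\clw_1}\otimes I_\clh$ on $\clw_1\otimes\clh$. The compression of $S_{\theta_1}^*$ to $\clw_1$ is $P_{\clw_1}S_{\theta_1}^*|_{\clw_1}$, so $S^*=(P_{\clw_1}\otimes I)R_1^{*}$-type. Concretely, double commutativity gives $R_1^*=P_\clm S_{\Theta_1}^*|_\clm$, and this commutes with $R_j$ for $j\ge2$. The key claim is that
\[
P_\clm S_{\Theta_1}^*|_\clm = (P_{\clw_1}\otimes I)S_{\Theta_1}^*|_\clm,
\]
i.e.\ $S^*\clm\subseteq\clm$.

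To prove it, I would use the defect identity from Lemma \ref{lem: 1 - S*S}: $I-R_1^*R_1$ is the compression of $(T_z^*\theta_1\otimes T_z^*\theta_1)\otimes I_\clh$. I would then argue in the spirit of Proposition \ref{reduce}, with Lemma \ref{ran} supplying that $P_{\clw_1}T_z^*\theta_1\neq0$ spans the defect of $S_{\theta_1}|_{\clw_1}$, and Lemma \ref{full} supplying cyclicity. The relevant defect space is
\[
\tilde\clm=\clm\cap\bigl(\mathbb{C}P_{\clw_1}T_z^*\theta_1\otimes\clh\bigr)=\mathbb{C}P_{\clw_1}T_z^*\theta_1\otimes\cle_1 .
\]
The point is that $\clm\ominus R_1\clm$ carries the wandering data. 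Double commutativity (the pair $R_1, R_j$) makes $\clm\ominus R_1\clm$ invariant under all $R_j$, $j\ge 2$, and likewise for the kernel structure. I would then show that $\clm=\bigvee_m P_{\clm}S_{\Theta_1}^{*m}\tilde\clm$, mirroring Lemma \ref{full}, and conclude $\clm=\clw_1\otimes\cle_1$ by Proposition \ref{subset reducing}.

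\textbf{Main obstacle.} This step is the hard one: showing that $\clm$ actually reduces $S$. Merely knowing $\clm\subseteq\clw_1\otimes\clh$ is not enough. What I would try first is Proposition \ref{model reducing common} applied to $\clm$ inside $\clw_1\otimes\clh\cong\clq_{\vp_1}\otimes\clh$ (via $f\mapsto\eta_1 f$ as in Proposition \ref{subset reducing}). That proposition shows $\bigvee_m S^{*m}\clm$ reduces $S$, so it equals $\clw_1\otimes\cle_1$. By the choice of $\clw_1$ as the span of slices, the minimality forces $\cle_1$ to be the span of the $\clh$-slices of $\clm$. Double commutativity, $R_1^*R_j=R_jR_1^*$, should then be used to show $P_\clm S^{*m}\clm$ stays in $\clm$, and hence $\bigvee_m S^{*m}\clm=\clm$. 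Here Proposition \ref{full space} (cyclicity of submodules under the backward shift) and Proposition \ref{orthogonal} are the tools for ruling out a nonzero complement $(\clw_1\otimes\cle_1)\ominus\clm$: such a complement would be orthogonal to a submodule in each slice.

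Once $\clm=\clw_1\otimes\cle_1$ with $\cle_1\subseteq\clq_{\theta_2}\otimes\cdots\otimes\clq_{\theta_n}$, I would finish by induction. The subspace $\cle_1$ is invariant and doubly commuting for $(S_{\Theta_2},\dots,S_{\Theta_n})$, because $R_j=I\otimes(\cdot)|_{\cle_1}$. Applying the induction hypothesis to $\cle_1$ yields $\clm=\clw_1\otimes\cdots\otimes\clw_n$. The base case $n=1$ is trivial.
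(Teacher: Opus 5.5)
\textbf{There is a genuine gap: the key step, that $\clm$ reduces $S$, is never proved.} Your outer framework does match the paper's. Your slice span $\clw_1$ coincides with the paper's $\clw_1$. Your target (that $\clm$ reduces $S=S_{\theta_1}|_{\clw_1}\otimes I_\clh$ on $\clw_1\otimes\clh$) is exactly the paper's central claim. The finish via Proposition \ref{subset reducing} and induction is also the same. But none of the routes you sketch reaches that target:
\begin{itemize}
\item Saying that double commutativity shows ``$P_\clm S^{*m}\clm$ stays in $\clm$'' is vacuous. What is needed is $S^{*m}\clm\subseteq\clm$, which is the claim itself.
\item Proposition \ref{model reducing common} applied to the single operator $S$ only yields $\bigvee_m S^{*m}\clm=\clw_1\otimes\cle_1$, i.e.\ $\clm\subseteq\clw_1\otimes\cle_1$. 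You already knew this from slices, and it uses no double commutativity.
\item The wandering-subspace heuristic fails because $R_1$ is not an isometry and $\clm\ominus R_1\clm=\ker R_1^*$ can be $\{0\}$. For $\clm=\clq_\Theta$ with $\theta_1(0)\neq0$, we have $\ker S_{\theta_1}^*=\clq_{\theta_1}\cap\mathbb{C}=\{0\}$, so $\ker R_1^*=\{0\}$.
\item The identity $\clm=\bigvee_m P_\clm S_{\Theta_1}^{*m}\tilde\clm$ is unproved. Even granted, it gives $\clm=\clw_1\otimes\cle_1$ only if $P_\clm S_{\Theta_1}^{*m}$ acts as $P_{\clw_1}S_{\theta_1}^{*m}\otimes I$ on $\tilde\clm$, which is again the claim.
\item Propositions \ref{orthogonal} and \ref{full space} are scalar statements that fail after tensoring with $\clh$ ($\clw_1\otimes e\perp\clw_1\otimes e'$ for $e\perp e'$). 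So they cannot exclude a nonzero complement $(\clw_1\otimes\cle_1)\ominus\clm$. Such complements do occur for non-doubly-commuting submodules: $\clm=\bigvee\{1\otimes z+z\otimes 1,\ z\otimes z\}\subseteq\clq_{z^2}\otimes\clq_{z^2}$ has $\clw_1\otimes\cle_1=\clq_{z^2}\otimes\clq_{z^2}$.
\end{itemize}
Double commutativity therefore has to enter through a specific mechanism. You cite the right relation $R_1^*R_j=R_jR_1^*$, but not how to use it.

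\textbf{The missing idea is to take backward shifts in the \emph{other} variables.} Put $\clm_1=\bigvee\{S_{\Theta_2}^{*m_2}\cdots S_{\Theta_n}^{*m_n}\clm: m_j\in\Z_+\}$. Proposition \ref{model reducing common} (with $\clh=\clq_{\theta_1}$ and the Jordan block $\clq_{\theta_2}\otimes\cdots\otimes\clq_{\theta_n}$) together with Proposition \ref{reduce} gives $\clm_1=\clw_1'\otimes\clq_{\theta_2}\otimes\cdots\otimes\clq_{\theta_n}$. Moreover $\clw_1'$ equals your $\clw_1$, since $\clm\subseteq\clm_1$ and $\clw_1\otimes\clh$ is invariant under every $I\otimes X$. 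Now take $f\in\clm$ and $x=S_{\Theta_2}^{*m_2}\cdots S_{\Theta_n}^{*m_n}f$. Use three facts: $S_{\Theta_1}$ commutes with each $S_{\Theta_j}^*$; $P_\clm S_{\Theta_j}^*P_\clm=P_\clm S_{\Theta_j}^*$ by invariance of $\clm$; and $R_1R_j^*=R_j^*R_1$. Then
\[
P_\clm S_{\Theta_1}x=P_\clm S_{\Theta_2}^{*m_2}\cdots S_{\Theta_n}^{*m_n}S_{\Theta_1}f=R_2^{*m_2}\cdots R_n^{*m_n}R_1f=R_1R_2^{*m_2}\cdots R_n^{*m_n}f=S_{\Theta_1}P_\clm x .
\]
Such $x$ span $\clm_1=\clw_1\otimes\clh$, so $P_\clm$ commutes with $S_{\Theta_1}|_{\clw_1\otimes\clh}=S$, i.e.\ $\clm$ reduces $S$. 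From there your endgame goes through. The analytic input of Proposition \ref{model reducing common} is thus needed in variables $2,\dots,n$, to show that $\clm$ generates all of $\clw_1\otimes\clh$ under those backward shifts, not in variable $1$.
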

\begin{proof}
Suppose $\clm$ is a doubly commuting submodule, that is,
\[
S_{\Theta_i}|_{\clm} (S_{\Theta_j}|_{\clm})^* = (S_{\Theta_j}|_{\clm})^* S_{\Theta_i}|_{\clm},
\]
for all $i > j$. Define
\[
\clm_1 : = \bigvee \{S_{\Theta_2}^{* m_2} \cdots S_{\Theta_n}^{* m_n} \clm: m_2, \ldots, m_n \in \Z_+\}.
\]
Proposition \ref{model reducing common} implies that $\clm_{1}$ reduces $(S_{\Theta_2}, \ldots, S_{\Theta_n})$. Apply Proposition \ref{reduce} with $\clh = \clq_{\theta_1}$ and $\clq_\Theta$ as $\clq_{\theta_2} \otimes \cdots \otimes \clq_{\theta_n}$ to find a closed subspace $\clw_1 \subseteq \clq_{\theta_1}$ such that
\[
\clm_1=\clw_1\otimes\clq_{\theta_2}\otimes\cdots\otimes\clq_{\theta_n}.
\]
Since $\clm_1$ is $S_{\Theta_1}$-invariant, it follows that $\clw_1$ is an $S_{\theta_1}$-invariant subspace of $\clq_{\theta_1}$, that is, $\clw_1$ is a submodule of $\clq_{\theta_1}$. We claim that $\clm$ reduces $S_{\Theta_1}|_{\clm_1}$, that is,
\[
P_{\clm}(S_{\Theta_1}|_{\clm_1})=(S_{\Theta_1}|_{\clm_1})P_{\clm}.
\]
To this end, we first note that, by the doubly commutativity property of $\clm$, we have
\[
S_{\Theta_1}|_{\clm} (S_{\Theta_j}|_{\clm})^* = (S_{\Theta_j}|_{\clm})^* S_{\Theta_1}|_{\clm},
\]
for all $j =2, \ldots, n$. Therefore, for each $m_i \in \Z_+$, $i=2, \ldots, n$, we have
\begin{align*}
S_{\Theta_1}|_{\clm}\prod\limits_{i=2}^{n}P_{\clm}S_{\Theta_i}^{*{m_i}}|_{\clm} & = \bigg(\prod\limits_{i=2}^{n}P_{\clm}S_{\Theta_i}^{*{m_i}}|_{\clm}\bigg)S_{\Theta_1}|_{\clm}
\\
&=P_{\clm}\bigg(\prod\limits_{i=2}^{n}S_{\Theta_i}^{*{m_i}} S_{\Theta_1}\bigg)|_{\clm}
\\
& = P_{\clm} \bigg(S_{\Theta_1} \prod\limits_{i=2}^{n}S_{\Theta_i}^{*{m_i}}\bigg)|_{\clm},
\end{align*}
which implies
\begin{align*}
P_{\clm}(S_{\Theta_1}|_{\clm_1})\prod\limits_{i=2}^{n}S_{\Theta_i}^{*{m_i}}|_{\clm} & =P_\clm S_{\Theta_1} \prod\limits_{i=2}^{n}S_{\Theta_i}^{*{m_i}}|_{\clm}
\\
& = S_{\Theta_1}|_{\clm}\prod\limits_{i=2}^{n}P_{\clm}S_{\Theta_i}^{*{m_i}}|_{\clm}
\\
&=S_{\Theta_1}P_{\clm}\prod\limits_{i=2}^{n}S_{\Theta_i}^{*{m_i}}|_{\clm}.
\end{align*}
Since $\clm\subseteq\clm_{1}$, we have $P_\clm=P_{\clm_{1}}P_\clm$, and hence
\[
P_{\clm}(S_{\Theta_1}|_{\clm_1})\prod\limits_{i=2}^{n} S_{\Theta_i}^{*{m_i}}f  = S_{\Theta_1} P_{\clm_{1}}P_{\clm}\prod\limits_{i=2}^{n} S_{\Theta_i}^{*{m_i}}f= (S_{\Theta_1}|_{\clm_{1}})P_{\clm}\prod\limits_{i=2}^{n} S_{\Theta_i}^{*{m_i}}f,
\]
for all $f \in \clm$. This proves the claim that $P_{\clm}(S_{\Theta_1}|_{\clm_1})=(S_{\Theta_1}|_{\clm_1})P_{\clm}$, that is, $\clm$ reduces $S_{\Theta_1}|_{\clm_1}$ on $\clm_1 = \clw_1 \otimes (\clq_{\theta_2} \otimes \cdots \otimes \clq_{\theta_n})$. By Proposition \ref{subset reducing}, there exists a closed subspace $\cle_1 \subseteq \clq_{\theta_2}\otimes\cdots\otimes\clq_{\theta_n}$ such that
\[
\clm=\clw_1\otimes\cle_1.
\]
Since $\clm$ is $(S_{\Theta_2},\ldots,S_{\Theta_n})$-invariant, it follows that $\cle_{1}$ is a submodule of $\clq_{\theta_2} \otimes \cdots \otimes \clq_{\theta_n}$. Moreover, note that
\[
P_\clm=P_{\clw_1}\otimes P_{\cle_{1}}.
\]
Using the fact that $\clm$ doubly commutes, for each $2\le r<l\le n$, we have
\[
S_{\Theta_r}|_\clm P_\clm S_{\Theta_l}^*|_\clm = P_\clm S_{\Theta_l}^*|_\clm S_{\Theta_r}|_\clm,
\]
and hence,
\[
(I_{\clw_1}\otimes S_{\Theta_r}|_{\cle_1}) (I_{\clw_1}\otimes P_{\cle_1} S_{\Theta_l}^*|_{\cle_1}) = (I_{\clw_1}\otimes P_{\cle_1}S_{\Theta_l}^*|_{\cle_1}) (I_{\clw_1}\otimes S_{\Theta_r}|_{\cle_1}).
\]
Therefore, it follows that
\[
S_{\Theta_r}|_{\cle_1}P_{\cle_1} S_{\Theta_l}^*|_{\cle_1} = P_{\cle_1} S_{\Theta_l}^*|_{\cle_1} S_{\Theta_r}|_{\cle_1},
\]
which proves that $\cle_{1}\subseteq \clq_{\theta_2}\otimes\cdots\otimes\clq_{\theta_n}$ is a doubly commuting submodule. Now, we focus on the doubly commuting submodule $\cle_1$ of the Jordan block $\clq_{\theta_2}\otimes\cdots\otimes\clq_{\theta_n}$, and apply the preceding method to find a submodule $\clw_2$ of $\clq_{\theta_2}$ and a doubly commuting submodule $\cle_2$ of $\clq_{\theta_3} \otimes \cdots \otimes \clq_{\theta_n}$ such that
\[
\cle_1=\clw_2\otimes\cle_2.
\]
Continuing in this manner, we ultimately obtain
\[
\clm=\clw_1\otimes\cdots\otimes\clw_n,
\]
where each $\clw_i$ is a submodule of $\clq_{\theta_i}$, $i \in I_n$. The converse follows from the fact that
\[
S_{\Theta_i}|_{\clm} = I_{\clw_1} \otimes \cdots\otimes S_{\theta_i}|_{\clw_i}\otimes\cdots\otimes I_{\clw_n},
\]
which clearly doubly commutes.
\end{proof}

The proof of the above result does not allow us to replace some of the Jordan blocks with the full Hardy space $H^2(\D)$. However, with a slight modification of the argument and the use of representations of doubly commuting submodules, it is possible to determine the doubly commuting submodules of such mixed spaces. This will be addressed in the next section.

\section{Mixed submodules}\label{sec: mix sub}

Fix $1 \leq r < n$. This section considers a class of closed subspaces of $H^2(\D^n)$ that are invariant under $n-r$ variables and star-invariant under the other $r$ variables. We present the doubly commuting invariant subspaces of such mixed subspaces of $H^2(\D^n)$. In spirit, the idea of the results and proofs has already been established in previous sections. However, the aim here is twofold: first, to remove the condition that all spaces in the tensor product are Jordan blocks; and second, to both recover and generalize the results of \cite{MR2380073}, which were obtained in the two-variable case.

Recall that \textit{submodules} of $H^2(\D^n)$ are closed subspaces $\cls$ of $H^2(\D^n)$ such that
\[
z_i \cls \subseteq \cls,
\]
for all $i \in I_n$. The following result gives us representations of reducing subspaces of vector-valued submodules of $H^2(\D^n)$:

\begin{proposition}\label{doubly red}
Let $\cls$ be a submodule of $H^2(\D^n)$, $\clh$ be a Hilbert space, and let
\[
\cls_\clh:=\cls\otimes\clh.
\]
Suppose $T_{\cls_\clh}:=((T_{z_1} \otimes I_\clh)|_{\cls_{\clh}}, \ldots, (T_{z_n} \otimes I_\clh)|_{\cls_{\clh}})$ is doubly commuting. Then there exists an inner function $\varphi\in H^\infty(\D^n)$ such that
\[
\cls=\varphi H^2(\D^n).
\]
Moreover, if $\clm$ is a closed subspace of $\cls_\clh$, then $\clm$ reduces $T_{\cls_\clh}$ if and only if there exists a closed subspace $\cle \subseteq \clh$ such that
\[
\clm=\cls\otimes\cle.
\]
\end{proposition}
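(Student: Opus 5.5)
The proof splits into two parts: first identify $\cls$ as $\vp H^2(\D^n)$ with $\vp$ inner, then transport the reducing-subspace question on $\cls_\clh$ to $H^2(\D^n)\otimes\clh$, where it is a standard wandering-subspace computation.

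\emph{Step 1: the form of $\cls$.} The restrictions $(T_{z_1}\otimes I_\clh)|_{\cls_\clh}$ are unitarily equivalent to $(T_{z_i}|_\cls)\otimes I_\clh$. Double commutativity of a tensor product $A_i\otimes I_\clh$ (with $\clh\neq\{0\}$) is equivalent to double commutativity of the $A_i$. Hence the tuple $(T_{z_1}|_\cls,\ldots,T_{z_n}|_\cls)$ is a doubly commuting tuple of isometries on $\cls$, i.e.\ $\cls$ is a doubly commuting submodule of $H^2(\D^n)$. We then invoke the known Beurling-type theorem for doubly commuting submodules of $H^2(\D^n)$ (Mandrekar for $n=2$, and the Sarkar result cited as \cite{MR3272037} for general $n$). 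This gives an inner $\vp\in H^\infty(\D^n)$ with $\cls=\vp H^2(\D^n)$.

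If we prefer a self-contained route, we argue via the Wold-type decomposition for doubly commuting isometries. Each $T_{z_i}|_\cls$ is a pure isometry. The common wandering subspace
\[
\clw=\bigcap_i\big(\cls\ominus z_i\cls\big)
\]
generates $\cls$, and since $\cls$ is a submodule of $H^2(\D^n)$ this forces $\dim\clw=1$, with a unit vector $\vp\in\clw$ that is inner. We will not redo this argument; we simply cite it.

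\emph{Step 2: reduction to $H^2(\D^n)\otimes\clh$.} Multiplication by $\vp$, namely $V=T_\vp\otimes I_\clh$, is a unitary from $H^2(\D^n)\otimes\clh$ onto $\cls\otimes\clh$. It intertwines $T_{z_i}\otimes I_\clh$ with $(T_{z_i}\otimes I_\clh)|_{\cls_\clh}$. Thus $\clm$ reduces $T_{\cls_\clh}$ if and only if $V^*\clm$ reduces $(T_{z_1}\otimes I_\clh,\ldots,T_{z_n}\otimes I_\clh)$. Since $V(H^2(\D^n)\otimes\cle)=\cls\otimes\cle$, it suffices to prove that reducing subspaces $\clm'$ of $(T_{z_i}\otimes I_\clh)_{i}$ have the form $H^2(\D^n)\otimes\cle$. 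The converse direction, that $\cls\otimes\cle$ reduces, is immediate.

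\emph{Step 3: reducing subspaces of the vector-valued shift.} Mimicking Proposition~\ref{reduce}, we use the product of defects
\[
\prod_{i=1}^n\big(I-(T_{z_i}\otimes I)(T_{z_i}\otimes I)^*\big)=P_{\mathbb{C}}\otimes I_\clh,
\]
where $P_\mathbb{C}$ is the projection onto the constants in $H^2(\D^n)$. It is a polynomial in the $T_{z_i}\otimes I$ and their adjoints, so it commutes with $P_{\clm'}$. Hence $P_{\clm'}(P_\mathbb{C}\otimes I)$ is the projection onto
\[
\clm'\cap(\mathbb{C}\otimes\clh)=1\otimes\cle
\]
for a closed subspace $\cle\subseteq\clh$. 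Invariance gives $H^2(\D^n)\otimes\cle=\bigvee_k z^k(1\otimes\cle)\subseteq\clm'$.

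For the reverse inclusion, take $f\in\clm'$. Its Taylor coefficient in direction $k$ is
\[
(P_\mathbb{C}\otimes I)(T_z^{*k}\otimes I)f,
\]
and this vector lies in $\clm'$, because $\clm'$ is invariant under the adjoints and under $P_\mathbb{C}\otimes I$. So every coefficient lies in $1\otimes\cle$, and therefore $f\in H^2(\D^n)\otimes\cle$.

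\emph{Main obstacle.} The only nontrivial input is Step 1: deducing that $\cls$ is generated by a single inner function. This rests entirely on the doubly commuting Beurling theorem. The reduction from the $\clh$-valued tuple to the scalar tuple on $\cls$ must be done with care, using $\clh\neq\{0\}$ and the identity
\[
P_{\cls_\clh}=P_\cls\otimes I_\clh,
\]
so that $(T_{z_i}\otimes I)|^*_{\cls_\clh}=(P_\cls T_{z_i}^*|_\cls)\otimes I_\clh$.
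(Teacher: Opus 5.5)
Your proposal is correct and follows the same route as the paper. You deduce double commutativity of $(T_{z_1}|_\cls,\ldots,T_{z_n}|_\cls)$ from $(T_{z_i}\otimes I_\clh)|_{\cls_\clh}=T_{z_i}|_\cls\otimes I_\clh$, apply the Beurling-type theorem for doubly commuting submodules (due to Mandrekar for $n=2$ and Sarkar--Sasane--Wick in general; the reference you attach to it is the paper's quotient-module result, not this one), and transport everything to $H^2(\D^n)\otimes\clh$ via multiplication by $\vp$. The only difference is that the paper simply asserts that the reducing subspaces of $H^2(\D^n)\otimes\clh$ are the spaces $H^2(\D^n)\otimes\cle$, whereas your Step 3 proves this correctly using the defect projection $P_{\mathbb{C}}\otimes I_\clh$ and Taylor coefficients.
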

\begin{proof}
Since $T_i|_{\cls_{\clh}}=T_{z_i}|_\cls\otimes I_\clh$, $i \in I_n$, and $T_{\cls_\clh}$ is doubly commuting, it follows that $(T_{z_1}|_{\cls},\ldots,T_{z_n}|_{\cls})$ on $\cls$ is doubly commuting. In other words, $\cls$ is a doubly commuting submodule of $H^2(\D^n)$, and consequently, there exists an inner function $\varphi\in H^\infty(\D^n)$ such that $\cls=\varphi H^2(\D^n)$ (see \cite{MR3151275, MR3117337}). There exists a unitary operator $U: \vp H^2(\D^n) \raro H^2(\D^n)$ (namely, $U (\vp f) = f$ for all $f \in H^2(\D^n)$) such that $U T_{z_i}|_\cls = T_{z_i} U$ for all $i \in I_n$. Define
\[
\tilde{U}: \vp H^2(\D^n) \otimes \clh=\cls \otimes \clh \raro H^2(\D^n) \otimes \clh,
\]
by $\tilde{U} = U \otimes I_\clh$. The result now follows from the fact that the reducing subspaces of $H^2(\D^n) \otimes \clh$ are precisely those of the form $H^2(\D^n) \otimes \cle$, where $\cle$ is a closed subspace of $\clh$.
\end{proof}

Given $r$ nonconstant inner functions $\{\theta_i\}_{i=1}^r \subseteq H^\infty(\D)$, define the mixed space
\[
\clh^2(\Theta) = H^2(\D^{n-r}) \otimes \clq_\Theta,
\]
where $\clq_\Theta = \clq_{\theta_1} \otimes \cdots \otimes \clq_{\theta_r}$. For each $i \in I_{n-r}$, and $j \in I_r$, define model operators $T_i$ and $S_j$ on $\clh^2(\Theta)$ by
\[
T_i = T_{z_i} \otimes I_{\clq_\Theta} \mbox{ and } S_j = I_{H^2(\D^{n-r})} \otimes S_{\Theta_j}.
\]
We write the $n$-tuple of model operators as
\[
M(\Theta) :=(T_1, \ldots, T_{n-r}, S_1, \ldots, S_r).
\]
As usual, a closed subspace $\clm$ of $\clh^2(\Theta)$ is called a submodule if $T_i \clm, S_j \clm \subseteq \clm$ for all $i$ and $j$. A submodule $\clm$ of $\clh^2(\Theta)$ is called doubly commuting if the tuple
\[
M(\Theta)|_\clm :=(T_1|_\clm, \ldots, T_{n-r}|_\clm, S_1|_\clm, \ldots, S_r|_\clm),
\]
on $\clm$ is doubly commuting.

We will need the following result, which follows directly from Proposition \ref{reduce}: Let $1\le r< n$. Then a closed subspace $\clm_{1}$ of $\clh^2(\Theta)$ reduces $(S_1,\ldots,S_r)$ if and only if there exists a closed subspace $\clw$ of $H^2(\D^{n-r})$ such that
\begin{equation}\label{mixed model red}
\clm_{1}=\clw\otimes\clq_{\theta_1}\otimes\cdots\otimes\clq_{\theta_r}.
\end{equation}

We now present characterizations of doubly commuting submodules, using the notation introduced above.

\begin{theorem}\label{main 1}
A submodule $\clm$ of $\clh^2(\Theta)$ is doubly commuting if and only if there exist an inner function $\varphi\in H^\infty(\D^{n-r})$ and submodules $\clw_{i}$ of $\clq_{\theta_i}$, $i \in I_r$, such that
\[
\clm=\varphi H^2(\D^{n-r}) \otimes (\clw_1\otimes\cdots\otimes\clw_r).
\]
\end{theorem}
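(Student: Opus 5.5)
The plan is to imitate the proof of Theorem \ref{main 0}. First I split off the Jordan block variables, and then I treat the Hardy space variables with Proposition \ref{doubly red}. The converse direction is routine. If $\clm=\varphi H^2(\D^{n-r})\otimes\clw_1\otimes\cdots\otimes\clw_r$, then each restricted operator is a tensor product of $T_{z_i}|_{\varphi H^2(\D^{n-r})}$ or $S_{\theta_j}|_{\clw_j}$ with identities in the other slots. Operators in different slots doubly commute. Moreover, the $T_{z_i}|_{\varphi H^2}$ doubly commute among themselves, since they are unitarily equivalent to $(T_{z_1},\ldots,T_{z_{n-r}})$ via $\varphi f\mapsto f$. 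Hence $\clm$ is doubly commuting.

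For the forward direction, let $\clm$ be doubly commuting. I set
\[
\clm_1=\bigvee\{S_1^{*m_1}\cdots S_r^{*m_r}\clm: m_j\in\Z_+\}.
\]
By Proposition \ref{model reducing common}, with $\clh=H^2(\D^{n-r})$, $\clm_1$ reduces $(S_1,\ldots,S_r)$. So \eqref{mixed model red} gives $\clm_1=\cls\otimes\clq_\Theta$ for a closed subspace $\cls\subseteq H^2(\D^{n-r})$. Each $T_i$ doubly commutes with every $S_j$ on the whole space, so $\clm_1$ is $T_i$-invariant, and therefore $\cls$ is a submodule of $H^2(\D^{n-r})$.

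Next I show that $\clm$ reduces $T_i|_{\clm_1}$ for each $i$. This is the same computation as in Theorem \ref{main 0}. The double commutativity of $\clm$ gives $T_i|_\clm P_\clm S^{*m}|_\clm=P_\clm S^{*m}|_\clm T_i|_\clm$. Combining this with $T_iS_j^*=S_j^*T_i$ on the ambient space and with $P_\clm=P_{\clm_1}P_\clm$, I get $P_\clm T_i=T_iP_\clm$ on the spanning set $S^{*m}\clm$ of $\clm_1$.

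The main obstacle is that the next step cannot follow Theorem \ref{main 0} directly. There, Proposition \ref{subset reducing} applied to a single operator. Here the tuple $(T_1|_{\cls},\ldots,T_{n-r}|_{\cls})$ need not be known to doubly commute before we have shown that $\cls=\varphi H^2$, so Proposition \ref{doubly red} cannot yet be used. I would first try to show that $\cls\otimes\clq_\Theta=\clm_1$ doubly commutes under the $T_i$. Since the $S^{*m}$ commute with all $T_i$ and $T_i^*$, one has
\[
P_{\clm_1}T_i^*S^{*m}f=S^{*m}P_{\clm_1}T_i^*f=S^{*m}P_\clm T_i^*f
\]
for $f\in\clm$. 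Here the last equality uses the reducing property: $P_{\clm_1}T_i^*f$ and $P_\clm T_i^*f$ agree because $\clm$ reduces $T_i|_{\clm_1}$. The doubly commuting relations for $\clm$ then transfer along $S^{*m}$ to the spanning set of $\clm_1$. Hence $\cls$ is a doubly commuting submodule of $H^2(\D^{n-r})$, and so $\cls=\varphi H^2(\D^{n-r})$ by Proposition \ref{doubly red}. In the case $n-r=1$ this is immediate from Beurling's theorem.

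Proposition \ref{doubly red}, applied with $\clh=\clq_\Theta$, now shows that $\clm$ reducing $(T_i|_{\clm_1})$ forces $\clm=\varphi H^2(\D^{n-r})\otimes\cle$ for a closed subspace $\cle\subseteq\clq_\Theta$. Since $\clm$ is $S_j$-invariant, $\cle$ is a submodule of $\clq_\Theta$. The orthogonal-projection argument of Theorem \ref{main 0}, using $P_\clm=P_{\varphi H^2}\otimes P_\cle$ and cancelling the nonzero first factor, shows that $\cle$ is doubly commuting. Theorem \ref{main 0} then gives $\cle=\clw_1\otimes\cdots\otimes\clw_r$. If $\clm=\{0\}$ the statement is trivial.
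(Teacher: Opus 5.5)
Your proposal is correct and follows the paper's proof essentially step for step: you form $\clm_1=\bigvee_k S^{*k}\clm=\cls\otimes\clq_\Theta$, show that $\clm$ reduces each $T_i|_{\clm_1}$, transfer double commutativity from $\clm$ to $\clm_1$ along the $S^{*m}$ so that Proposition \ref{doubly red} applies, and finish with Theorem \ref{main 0} applied to $\cle$. Your transfer step, which uses $P_{\clm_1}S^{*m}=S^{*m}P_{\clm_1}$ and $P_{\clm_1}T_i^*f=P_\clm T_i^*f$ for $f\in\clm$, is a cleaner rendering of the paper's corresponding computation.
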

\begin{proof}
Suppose $\clm$ is a doubly commuting submodule. Following the proof of Theorem \ref{main 0}, define
\[
\clm_1:=\bigvee\limits_{k \in \mathbb{Z}_+^r} S^{*k}\clm.
\]
By Proposition \ref{model reducing common}, we know that $\clm_{1}$ is a $(S_1,\ldots,S_r)$-reducing subspace of $\clh^2(\Theta)$. By \eqref{mixed model red}, there exists a closed subspace $\clw$ of $H^2(\D^{n-r})$ such that
\[
\clm_{1}=\clw \otimes \clq_{\Theta}.
\]
Note that since $\clm$ is $(T_{1},\ldots,T_{{n-r}})$-invariant, the same holds for $\clm_{1}$. We follow the same argument as in the proof of the reducing part of Theorem \ref{main 0} to conclude that $\clm$ is a $(T_1|_{\clm_{1}},\ldots,T_{n-r}|_{\clm_{1}})$-reducing subspace. That is,
\[
P_\clm T_{j}|_{\clm_{1}}=T_{j}|_{\clm_{1}}P_\clm,
\]
for all $j \in I_{n-r}$. We next claim that $(T_{1}|_{\clm_{1}},\ldots,T_{{n-r}}|_{\clm_{1}})$ is doubly commuting. To see this, first note that since $\clm\subseteq\clm_{1}$, we have
\[
P_\clm=P_\clm P_{\clm_1}=P_{\clm_1}P_\clm.
\]
Fix $1\le i<j\le n-r$. By doubly commutativity of $(T_{1}|_{\clm},\ldots,T_{{n-r}}|_{\clm})$, we have
\[
T_i|_\clm P_\clm T^*_j|_\clm=P_\clm T^*_j|_\clm T_i|_\clm,
\]
and hence
\[
T_i (P_{\clm_1}P_\clm P_{\clm_1}) T^*_j (P_{\clm_1}P_\clm) = (P_\clm P_{\clm_1}) T^*_j (P_{\clm_1}P_\clm P_{\clm_1}) T_i (P_{\clm_1}P_\clm),
\]
which implies
\[
(T_i|_{\clm_1}) P_\clm (P_{\clm_1} T^*_j|_{\clm_1}) P_\clm = P_\clm (P_{\clm_1} T^*_j|_{\clm_1}) P_\clm (T_i|_{\clm_1})P_\clm.
\]
At this point, we use the fact that $\clm$ is $(T_1|_{\clm_{1}},\ldots,T_{n-r}|_{\clm_{1}})$-reducing to obtain
\[
(T_i|_{\clm_1}) (P_{\clm_1} T^*_j|_{\clm_1}) P_\clm = (P_{\clm_1}T^*_j|_{\clm_1}) (T_i|_{\clm_1}) P_\clm.
\]
Finally, since $\clm_1$ reduces $(S_1,\ldots,S_r)$ and the tuple $(T_1,\ldots,T_{n-r},S_1,\ldots,S_r)$ is doubly commuting, applying $\prod\limits_{i=1}^{r}S^{*{m_i}}_i$ from the left to both sides of the above identity  yields
\[
(T_i|_{\clm_1}) (P_{\clm_1} T^*_j|_{\clm_1}) \prod\limits_{i=1}^{r}S^{*{m_i}}_iP_\clm = (P_{\clm_1}T^*_j|_{\clm_1}) (T_i|_{\clm_1}) \prod\limits_{i=1}^{r}S^{*{m_i}}_iP_\clm,
\]
and hence, by the construction of the space $\clm_1$, it follows that $(T_{1}|_{\clm_{1}},\ldots,T_{{n-r}}|_{\clm_{1}})$ is doubly commuting.

\noindent By construction, $\clm_{1} = \clw \otimes \clq_\Theta$ is invariant under $(T_1,\ldots, T_{n-r})$, and hence $\clw$ is a submodule of $H^2(\D^{n-r})$. Since $(T_{1}|_{\clm_1},\ldots,T_{{n-r}}|_{\clm_1})$ is doubly commuting, and $\clm\subseteq \clm_1$ reduces this tuple, Proposition \ref{doubly red} ensure an inner function $\varphi\in H^{\infty}(\D^{n-r})$ and a closed subspace $\cle$ of $\clq_\Theta$ such that
\[
\clm = \varphi H^2(\D^{n-r}) \otimes\cle.
\]
Recall that $S_j = I_{H^2(\D^{n-r})} \otimes S_{\Theta_j}$ for $j \in I_r$. Since $(S_1|_{\clm},\ldots,S_r|_{\clm})$ doubly commute, it follows that $(S_{\Theta_1}|_\cle, \ldots, S_{\Theta_r}|_\cle)$ is also doubly commuting. By Theorem \ref{main 0}, there exist submodules $\clw_{i} \subseteq \clq_{\theta_i}$, $i \in I_r$, such that $\cle=\clw_1\otimes\cdots\otimes\clw_r$. Hence, $\clm=\varphi H^2(\D^{n-r})\otimes\clw_1\otimes\cdots\otimes\clw_r$. The converse follows from the tensor product structure of the operators $T_i$'s and $S_j$'s.
\end{proof}

We now consider a particular case, namely the central result of \cite{MR2380073}, which follows as a special case of the above theorem. This case served as both the underlying motivation and a guiding perspective for the results developed in this paper.

\begin{remark}\label{rem: Nakazi et al}
Let $\theta(z) = z^2$, and consider the mixed space $H^2(\D)\otimes\clq_{\theta}$. The nonzero submodules of $\clq_{\theta}$ are precisely $\clq_{z^2}$ and $z\clq_z$. Let $\clm$ be a submodule of $H^2(\D)\otimes\clq_{\theta}$. By Theorem \ref{main 1}, $\clm$ is doubly commuting if and only if there exists an inner function $\varphi\in H^\infty(\D)$ such that
\[
\clm = \varphi H^2(\D)\otimes\clq_{z^2} \text{ or } \clm = \varphi H^2(\D)\otimes z\clq_{z}.
\]
By identifying $H^2(\D)\otimes\clq_{z^2}$ with $H^2(\mathbb{C}^2)$, this recovers the result proved in \cite{MR2380073}. The present setting is not only much more general, but also provides a new approach to proving the result of \cite{MR2380073}.
\end{remark}

\section{Unitary equivalence}\label{sec: unit equiv}

In this section, we retain the notation introduced in the previous section. It is a celebrated result of Douglas and Foias \cite{MR1243270} that two nontrivial quotient modules of $H^2(\D^n)$ are never unitarily equivalent. In this section, we investigate this problem in the context of doubly commuting submodules of
\[
\clh^2(\Theta) = H^2(\D^{n-r}) \otimes \clq_\Theta,
\]
where $\clq_{\Theta} = \clq_{\theta_1} \otimes \cdots \otimes \clq_{\theta_r}$. Two submodules $\clm_{1}$ and $\clm_{2}$ of $\clh^2(\Theta)$ are unitarily equivalent if there exists a unitary $U:\clm_{1}\to\clm_{2}$ such that
\begin{equation}\label{eqn: U T_i = T_i U}
U(T_i|_{\clm_1}) = (T_i|_{\clm_2})U \text{ and } U(S_i|_{\clm_1})=(S_i|_{\clm_2})U,
\end{equation}
for all $i \in I_{n-r}$, and $j \in I_r$.

\begin{theorem}\label{thm: unit equiv}
Let $\clm_1$ and $\clm_2$ be two doubly commuting submodules of $\clh^2(\Theta)$. Then $\clm_1$ and $\clm_2$ are unitarily equivalent if and only if there exist submodules $\clw_i$ of $\clq_{\theta_i}$ for $i \in I_r$, and inner functions $\varphi,\psi\in H^\infty(\D^{n-r})$ such that
\[
\clm_1=\varphi H^2(\D^{n-r})\otimes \clw_1\otimes\cdots\otimes\clw_r \text{ and } \clm_2=\psi H^2(\D^{n-r})\otimes \clw_1\otimes\cdots\otimes\clw_r.
\]
\end{theorem}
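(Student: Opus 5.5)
By Theorem \ref{main 1}, we may write
\[
\clm_1=\varphi H^2(\D^{n-r})\otimes \clw_1\otimes\cdots\otimes\clw_r \quad\text{and}\quad \clm_2=\psi H^2(\D^{n-r})\otimes \clw'_1\otimes\cdots\otimes\clw'_r,
\]
with $\varphi,\psi\in H^\infty(\D^{n-r})$ inner and $\clw_i,\clw_i'$ submodules of $\clq_{\theta_i}$. The plan is to show that unitary equivalence forces $\clw_i=\clw_i'$ for every $i\in I_r$, and to build the unitary directly for the converse.

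\textbf{Sufficiency.} Suppose both submodules share the same $\clw_i$'s. Take the unitary $U(\varphi f)=\psi f$ from $\varphi H^2(\D^{n-r})$ onto $\psi H^2(\D^{n-r})$. It intertwines $T_{z_i}|_{\varphi H^2}$ with $T_{z_i}|_{\psi H^2}$, as in the proof of Proposition \ref{doubly red}. Then $U\otimes I$ intertwines each $T_i$. It also intertwines each $S_j$, because on both spaces $S_j$ acts as $I\otimes S_{\Theta_j}|_{\clw_1\otimes\cdots\otimes\clw_r}$.

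\textbf{Necessity.} Let $U:\clm_1\to\clm_2$ be a unitary satisfying \eqref{eqn: U T_i = T_i U}. I would extract from each $\clm_k$ a unitary invariant that determines the $\clw_i$'s. Write $\clw_i=\eta_i\clq_{\vp_i}$ with $\eta_i\vp_i=\theta_i$. As in the proof of Proposition \ref{subset reducing}, $S_{\theta_i}|_{\clw_i}$ is unitarily equivalent to the Jordan block $S_{\vp_i}$. Hence $S_j|_{\clm_1}$ is unitarily equivalent to
\[
I_{H^2(\D^{n-r})}\otimes I\otimes\cdots\otimes S_{\vp_j}\otimes\cdots\otimes I.
\]
The intertwining relation shows that $S_j|_{\clm_1}$ and $S_j|_{\clm_2}$ are unitarily equivalent. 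The minimal function of $S_j|_{\clm_1}$ (the $H^\infty$ functional calculus annihilator, using that the operator is $C_0$) equals the minimal function of $S_{\vp_j}$, namely $\vp_j$, since tensoring with an identity does not change the annihilator. Unitary equivalence preserves minimal functions, so $\vp_j=\vp_j'$ up to a unimodular constant.

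It then follows that $\eta_j=\theta_j/\vp_j=\eta_j'$ up to a constant, so $\clw_j=\eta_j\clq_{\vp_j}=\clw'_j$. This gives the desired form with the common $\clw_i$'s.

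\textbf{Main obstacle.} The delicate step is justifying that the minimal function of $I\otimes S_{\vp}\otimes I$ equals $\vp$. For $u\in H^\infty(\D)$ one has $u(I\otimes S_\vp\otimes I)=I\otimes u(S_\vp)\otimes I$, which vanishes if and only if $u(S_\vp)=0$. By the classical fact $u(S_\vp)=P_{\clq_\vp}T_u|_{\clq_\vp}$, this holds if and only if $\vp$ divides $u$.

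If one prefers to avoid Sz.-Nagy–Foias theory, there is an alternative. Compare the ranks of $I-S_j^*S_j$ restricted to the joint kernel/defect structures. Alternatively, use Lemma \ref{lem: 1 - S*S} together with the dimension of $\ker (S_j|_{\clm})^{*}$ along the $j$-th factor. However, these data only detect whether $\vp_j$ is nonconstant, so I would rely on the minimal-function argument.
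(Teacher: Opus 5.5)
Your proof is correct, and it takes a different, shorter route than the paper. The paper never isolates a single $S_j|_{\clm}$. Instead it multiplies all the defect operators of the tuple, $\prod_i(I-T_iP_{\clm}T_i^*|_{\clm})\prod_j(I-P_{\clm}S_j^*S_j|_{\clm})$. By Lemma~\ref{lem: 1 - S*S}, Lemma~\ref{ran} and the standard defect identity for $\varphi H^2(\D^{n-r})$, this product is the rank-one operator $P_{\mathbb{C}\varphi}\otimes F_1\otimes\cdots\otimes F_r$. Hence $U^*$ must carry a distinguished unit vector $\psi\otimes g_1\otimes\cdots\otimes g_r$ to $\varphi\otimes f_1\otimes\cdots\otimes f_r$. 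The cyclicity of Lemma~\ref{full} then produces unitaries from $\clw_i'$ onto $\clw_i$ intertwining $(S_{\theta_i}|_{\clw_i'})^*$ with $(S_{\theta_i}|_{\clw_i})^*$. A uniqueness theorem, cited from Douglas--Foias, finishes the argument.

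You bypass the defect operators and cyclic vectors entirely. $U$ intertwines each $S_j|_{\clm}$ on its own, and $S_j|_{\clm_1}$ is an ampliation of $S_{\vp_j}$. The minimal function is insensitive to multiplicity, so it recovers $\vp_j$ up to a unimodular constant, which determines $\eta_j=\theta_j/\vp_j$ and hence $\clw_j$.

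What the paper's route buys is removal of multiplicity using only its own lemmas: it obtains an equivalence between the one-variable operators $S_{\theta_i}|_{\clw_i'}$ and $S_{\theta_i}|_{\clw_i}$ themselves. But these are copies of $S_{\vp_i'}$ and $S_{\vp_i}$, and the $\clw_i$ are submodules rather than quotient modules of $\clq_{\theta_i}$. So its last step really rests on the classical fact that $S_\vp\cong S_{\vp'}$ forces $\vp'$ to be a unimodular multiple of $\vp$. That is the multiplicity-one case of what you use.

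Your route costs the $H^\infty$ functional calculus and Sarason's formula $u(S_\vp)=P_{\clq_\vp}T_u|_{\clq_\vp}$. In exchange it is shorter and proves slightly more. Unitary equivalence of each $S_j|_{\clm_1}$ with $S_j|_{\clm_2}$ separately already forces $\clw_j=\clw_j'$, with no use of the $T_i$'s and no common unitary.

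One detail to add: the step $u(I\otimes S_{\vp_j}\otimes I)=0\iff u(S_{\vp_j})=0$ needs the other tensor factors to be nonzero. So first dispose of the trivial case $\clm_1=\{0\}$, which is equivalent to $\clm_2=\{0\}$. Once $\clm_1\neq\{0\}$, every $\clw_i$ is nonzero and every $\vp_i$ is nonconstant, and your argument goes through as written.
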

\begin{proof}
Let $\clm_{1}=\varphi H^2(\D^{n-r})\otimes\clw_1\otimes\cdots\otimes\clw_r$ and $\clm_{2}=\psi H^2(\D^{n-r})\otimes\cle_1\otimes\cdots\otimes\cle_r$. Let $U:\clm_{1}\to\clm_{2}$ be a unitary operator satisfying \eqref{eqn: U T_i = T_i U}. It follows that
\begin{equation}\label{eqn: U* and all}
\begin{split}
U^* \prod_{i=1}^{n-r} (I_{\clm_{2}}-T_i P_{\clm_{2}}T^*_i|_{\clm_2}) \prod_{j=1}^{r} (I_{\clm_{2}} & - P_{\clm_{2}}S^*_i S_i|_{\clm_2}) U = \prod_{i=1}^{n-r} (I_{\clm_1}-T_i P_{\clm_1} T^*_i|_{\clm_1})
\\
& \quad \times \prod_{j=1}^{r} (I_{\clm_1} - P_{\clm_1} S^*_i S_i|_{\clm_1}).
\end{split}
\end{equation}
We now recall a general fact: given a doubly commuting submodule $\cls = \eta H^2(\D^{n-r})$, where $\eta \in H^\infty(\D^{n-r})$ is inner, one way to recover $\eta$ is via the following well-known identity (cf. \cite{MR3117337}):
\[
\prod_{i=1}^{n-r} (I_{H^2(\D^{n-r})} - T_{z_i} P_{\cls} T_{z_i}^*)|_\cls = P_{\mathbb{C} \eta}.
\]
Also recall from Lemma \ref{lem: 1 - S*S} that $I - S_{\theta_j}^* S_{\theta_j} = T_z^* {\theta_j} \otimes T_z^* \theta_j$ for all $j \in I_r$. Combining these identities, together with \eqref{eqn: U* and all}, we conclude that
\[
U^*(P_{\mathbb{C}\psi} \otimes G_{1}\otimes\cdots \otimes G_r)U = P_{\mathbb{C} \vp} \otimes F_{1}\otimes\cdots\otimes F_r
\]
where $F_i = P_{\clw_i} (T_z^*\theta_i \otimes T_z^*\theta_i)|_{\clw_i}$ and $G_i = P_{\cle_i} (T_z^*\theta_i \otimes T_z^*\theta_i)|_{\cle_{i}}$ are rank-one operators, $i \in I_r$. Let $f_i$ and $g_i$ be norm-one vectors from $\text{ran} F_i$ and $\text{ran} G_i$, respectively, for $i \in I_r$, such that
\[
U^*(\psi\otimes g_1\otimes\cdots\otimes g_r)=\varphi\otimes f_1\otimes\cdots\otimes f_r.
\]
For each $m\in\mathbb{Z}_+$, we compute
\begin{align*}
U^*(\psi\otimes g_1\otimes\cdots\otimes P_{\cle_{i}}S^{*m}_{\theta_i}g_i\otimes\cdots\otimes g_r) & = U^*P_{\clm_{2}}S_i^{*m}|_{\clm_{2}}(\psi\otimes g_1\otimes\cdots\otimes g_i\otimes\cdots\otimes g_r)
\\
& = P_{\clm_{1}}S_i^{*m}|_{\clm_{1}}U^*(\psi\otimes g_1\otimes\cdots\otimes g_i\otimes\cdots\otimes g_r)
\\
&=P_{\clm_{1}}S_i^{*m}|_{\clm_{1}}(\varphi\otimes f_1\otimes\cdots f_i\otimes\cdots\otimes f_r)
\\
& = \varphi\otimes f_1\otimes\cdots P_{\clw_{i}}S_{\theta_i}^{*m}f_i\otimes\cdots\otimes f_r.
\end{align*}
By Lemma \ref{full}, we know $\bigvee\limits_{m\in\mathbb{Z}_+}P_{\clw_{i}}S^{*m}_{\theta_i} f_i = \clw_{i}$ and $\bigvee\limits_{m \in\mathbb{Z}_+} P_{\cle_{i}}S^{*m}_{\theta_i}g_i = \cle_{i}$. Since $U^*$ is unitary, it follows that the mapping $V_i(P_{\cle_{i}}S^{*m}_{\theta_i}g_i)=P_{\clw_{i}}S^{*m}_{\theta_i}f_i$ defines a unitary operator $V_i:\cle_{i}\to\clw_{i}$ satisfying
\[
V_i(P_{\cle_{i}}S^*_{\theta_i}|_{\cle_{i}})=(P_{\clw_{i}}S^*_{\theta_i}|_{\clw_{i}})V_i,
\]
for all $i \in I_r$. Then Douglas and Foias \cite[Theorem 1]{MR1243270} implies that $\clw_{i}=\cle_{i}$ and $V_i=\lambda_i I_{\clw_{i}}$ for some $\lambda_i\in\mathbb{T}$, for $i \in I_r$. The converse follows easily from the fact that $\varphi H^2(\D^{n-r})$ and $\psi H^2(\D^{n-r})$ are always unitarily equivalent.
\end{proof}

Jordan blocks of $H^2(\D^n)$ are simple yet powerful objects for analyzing the structure of submodules and quotient modules of $H^2(\D^n)$. The theory of invariant subspaces of operators or commuting tuples of operators is itself a subject of independent interest and of wider applicability \cite{DGS, II,Y}. It is therefore natural to expect that a deeper understanding of the invariant subspaces of Jordan blocks of $H^2(\D^n)$ will yield further insight into the general theory of Hilbert spaces of analytic functions. We hope to pursue results along these lines in future work.

\bigskip

\noindent\textbf{Acknowledgement:} This research was initiated during the second author’s visit to the National Defense Academy, Japan, in the April of 2025, and he acknowledges the generous and warm hospitality extended there. Part of the work was also carried out during a workshop at RIMS, Kyoto, in April 2026, and the second and third authors gratefully acknowledge the support of RIMS. The research of the first named author is supported by NBHM (National Board of Higher Mathematics, India) Ph.D. fellowship No. 0203/13(47)/2021-R\&D-II/13177. The research of the second named author is supported in part by MATRICS (ANRF/ARGM/2025/000130/MTR) by ANRF, Department of Science \& Technology (DST), Government of India. The second and third authors were also supported by JSPS KAKENHI Grant Number JP24K06771.

\bibliographystyle{amsplain}

\end{document}